\documentclass{amsproc}
\usepackage{amssymb}
\usepackage{amsmath}
\usepackage{graphicx}
\graphicspath{ {./images/} }
\usepackage{manfnt}
\usepackage{amsthm}
\usepackage{stmaryrd}
\usepackage{url}
\usepackage{mathtools}
\usepackage{color}
\usepackage{enumitem}
\usepackage{array}
\usepackage{tikz}
\usepackage{tikz-cd}
\usetikzlibrary{intersections}
\usepackage{mathrsfs}
\numberwithin{equation}{section}
\usepackage{xcolor}
\usepackage{hyperref}

\newcommand{\bbm}{\begin{bmatrix}}
\newcommand{\ebm}{\end{bmatrix}}
\newcommand{\bv}{\begin{vmatrix}}
\newcommand{\ev}{\end{vmatrix}}

\newcommand{\g}{\mathfrak{g}}

\newcommand{\C}{\mathbb{C}}

\newcommand{\mf}{\mathfrak}
\newcommand{\mc}{\mathcal}

\newcommand{\Z}{\mathbb{Z}}

\newcommand{\R}{\mathbb{R}}
\newcommand{\PP}{\mathbb{P}}

\newcommand{\bp}{\begin{pmatrix}}
\newcommand{\ep}{\end{pmatrix}}

\DeclareMathOperator{\Lie}{Lie}

\DeclareMathOperator{\stab}{stab}

\DeclareMathOperator{\SL}{SL}
\DeclareMathOperator{\SU}{SU}

\newtheorem{theorem}{Theorem}[section]

\theoremstyle{definition}

\theoremstyle{remark}
\newtheorem{remark}[theorem]{Remark}

\numberwithin{equation}{section}

\title{Four examples of Beilinson--Bernstein localization}

\author{Anna Romanov}
\address{A14 Quadrangle, Sydney Mathematical Research Institute, The University of Sydney, New South Wales, Australia 2006}
\email{anna.romanov@sydney.edu.au}
\thanks{Supported by the National Science Foundation Award No. 1803059}

\subjclass[2000]{Primary 17B10, Secondary 14F10}

\date{today}

\begin{document}
\maketitle

\begin{abstract} Let $\mf{g}$ be a complex semisimple Lie algebra. The Beilinson--Bernstein localization theorem establishes an equivalence of the category of $\mf{g}$-modules of a fixed infinitesimal character and a category of modules over a twisted sheaf of differential operators on the flag variety of $\mf{g}$. In this expository paper, we give four detailed examples of this theorem when $\mf{g}=\mf{sl}(2,\C)$. Specifically, we describe the $\mc{D}$-modules associated to finite-dimensional irreducible $\mf{g}$-modules, Verma modules, Whittaker modules, discrete series representations of $\SL(2,\R)$, and principal series representations of $\SL(2,\R)$. 
\end{abstract}

\section{Introduction}
This paper revolves around the following beautiful theorem of Beilinson--Bernstein. Let $\mf{g}$ be a complex semisimple Lie algebra, $\mc{U}(\mf{g})$ its universal enveloping algebra, and $\mc{Z}(\mf{g}) \subset \mc{U}(\mf{g})$ the center. Fix a Cartan subalgebra $\mf{h} \subset \mf{g}$ and let $\lambda \in \mf{h}^*$. The Weyl group orbit $\theta \subset \mf{h}^*$ of $\lambda$ determines an infinitesimal character $\chi_\theta:\mc{Z}(\mf{g})\rightarrow \C$, and we denote by $\mc{U}_\theta$ the quotient of $\mc{U}(\mf{g})$ by the ideal generated by the kernel of $\chi_\theta$. In \cite{BB}, Beilinson--Bernstein construct a twisted sheaf of differential operators $\mc{D}_\lambda$ on the flag variety $X$ of $\mf{g}$ associated to $\lambda$. Denote by $\mc{M}(\mc{U}_\theta)$ the category of $\mc{U}_\theta$-modules and by $\mc{M}_{qc}(\mc{D}_\lambda)$ the category of quasi-coherent $\mc{D}_\lambda$-modules. 
\begin{theorem}
\label{localization}
{\em (Beilinson--Bernstein \cite{BB})} Let $\lambda \in \mf{h}^*$ be dominant and regular. There is an equivalence of categories
\[
\begin{tikzcd}
\mc{M}(\mc{U}_\theta) \arrow[r, bend left=30, "\Delta_\lambda"]
& \mc{M}_{qc}(\mc{D}_\lambda)
\arrow[l, bend left=30, "\Gamma"]
\end{tikzcd}
\]
given by the localization functor $\Delta_\lambda(V)=\mc{D}_\lambda \otimes _{\mc{U}_\theta} V$ and the global sections functor $\Gamma$. 
\end{theorem}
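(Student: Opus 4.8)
The plan is to deduce the equivalence from a general $\mc{D}_\lambda$-affineness criterion, whose two essential inputs are an exactness statement (requiring dominance) and a faithfulness statement (requiring regularity); everything else is formal adjunction and homological algebra.

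First I would pin down the global sections. There is a canonical algebra map $\mc{U}(\g) \to \Gamma(X, \mc{D}_\lambda)$ arising from the infinitesimal $\g$-action on $X = G/B$ by vector fields twisted by $\lambda$, and it factors through $\mc{U}_\theta$. I would prove this map is an isomorphism $\mc{U}_\theta \xrightarrow{\ \sim\ } \Gamma(X, \mc{D}_\lambda)$, valid for \emph{all} $\lambda$. The standard argument filters $\mc{D}_\lambda$ by order of differential operator, identifies $\mathrm{gr}\,\mc{D}_\lambda$ with $p_*\mc{O}_{T^*X}$ for the projection $p\colon T^*X \to X$, and computes $\Gamma(X, p_*\mc{O}_{T^*X}) = \Gamma(T^*X, \mc{O}) \cong \C[\N]$ using that the moment map (Springer resolution) $T^*X \to \N$ is proper and birational onto the nilpotent cone $\N \subset \g^*$. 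Comparing with the Kostant--Harish-Chandra description $\mathrm{gr}\,\mc{U}_\theta \cong \C[\N]$ gives the isomorphism on associated graded, hence on the nose. This guarantees $\Gamma$ lands in $\mc{M}(\mc{U}_\theta)$, and tensor-hom adjunction makes $(\Delta_\lambda, \Gamma)$ an adjoint pair, with $\Hom_{\mc{D}_\lambda}(\Delta_\lambda V, \mc{F}) \cong \Hom_{\mc{U}_\theta}(V, \Gamma \mc{F})$.

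Next I would invoke the abstract affineness lemma: if $\Gamma$ is exact on $\mc{M}_{qc}(\mc{D}_\lambda)$ and annihilates no nonzero object, then $\Delta_\lambda$ and $\Gamma$ are mutually inverse equivalences. The mechanism proceeds in stages. The unit $V \to \Gamma \Delta_\lambda V$ is the identity for $V = \mc{U}_\theta$ by the computation above, hence an isomorphism for free modules; taking a free presentation of an arbitrary $V$, applying the right-exact $\Delta_\lambda$ and the exact $\Gamma$, and invoking the five lemma shows the unit is always an isomorphism, so $\Delta_\lambda$ is fully faithful. For the counit $\Delta_\lambda \Gamma \mc{F} \to \mc{F}$, exactness of $\Gamma$ forces $\Gamma$ of its cokernel to vanish, so faithfulness makes the counit surjective; exactness then forces $\Gamma$ of its kernel to vanish, and faithfulness again forces the kernel to be zero. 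Thus the counit is an isomorphism and the equivalence follows. It therefore remains only to verify the two hypotheses.

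Exactness of $\Gamma$ amounts to the cohomology vanishing $H^i(X, \mc{F}) = 0$ for all $i > 0$ and all quasi-coherent $\mc{D}_\lambda$-modules $\mc{F}$; this is where dominance of $\lambda$ enters. I would reduce to coherent $\mc{F}$ by writing an arbitrary module as a filtered colimit, take a good filtration, and pass to $\mathrm{gr}\,\mc{F}$, a coherent sheaf on $T^*X$, whereupon the vanishing becomes a positivity statement for the twist by $\lambda$ along the fibres of the Springer map, reducible to Borel--Weil--Bott-type vanishing for line bundles. Faithfulness uses regularity of $\lambda$: it suffices that $\Gamma(\mc{F}) = 0$ forces $\mc{F} = 0$, which holds because regularity makes the relevant characteristic-variety and support estimates nondegenerate, so that global sections detect the module. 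I expect the cohomology-vanishing step to be the principal obstacle, since it is the genuinely geometric heart of the theorem and the precise point at which dominance is indispensable; once that input is secured, the passage to the equivalence is purely formal.
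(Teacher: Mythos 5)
First, a point of order: the paper does not prove Theorem \ref{localization} at all --- it is quoted from \cite{BB} and used as a black box, so the comparison here is with the standard published proof that you are implicitly reconstructing. Your formal skeleton matches that proof exactly and is sound: the isomorphism $\mc{U}_\theta \simeq \Gamma(X,\mc{D}_\lambda)$ via the order filtration, Kostant's theorem, and the Springer map (two small omissions: normality of the nilpotent cone $\N$ is what gives $\Gamma(T^*X,\mc{O}_{T^*X}) \simeq \C[\N]$ from proper birationality, and $H^{>0}(X,\mathrm{Sym}^k T_X)=0$ is needed to climb from the graded isomorphism back to the filtered one); the adjunction; and the abstract affineness mechanism for the unit and counit, which is the purely formal part, correctly executed.

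The genuine gap is in both of the two essential inputs. For exactness, your reduction to $\mathrm{gr}\,\mc{F}$ cannot work, because the associated graded forgets $\lambda$ entirely: $\mathrm{gr}\,\mc{D}_\lambda \simeq p_*\mc{O}_{T^*X}$ canonically for \emph{every} $\lambda$, and a good filtration exhibits $\mathrm{gr}\,\mc{F}$ as a bare coherent sheaf on $T^*X$ carrying no residual twist, so the ``positivity statement for the twist by $\lambda$ along the fibres of the Springer map'' that you invoke is simply not visible at the graded level. If the reduction worked it would prove vanishing for all $\lambda$, which is false: in this paper's notation, $\mc{O}(-2)$ is a $\mc{D}_t$-module for $t=-1$ with $H^1(\PP^1,\mc{O}(-2)) \neq 0$. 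Worse, the needed intermediate statement fails even for dominant $\lambda$: for $\mc{F} = \mc{D}_\lambda \otimes_{\mc{O}_X} \mc{O}(-2)$ on $\PP^1$, the induced good filtration gives $\mathrm{gr}\,\mc{F} \simeq \bigoplus_{k\geq 0} \mc{O}(2k-2)$, whose $H^1$ is nonzero (the $k=0$ summand), while $H^1(X,\mc{F})=0$ by the theorem itself --- so vanishing for $\mc{F}$ holds \emph{despite} its failure for $\mathrm{gr}\,\mc{F}$, and no degeneration argument can bridge that. The actual mechanism in \cite{BB} (see also \cite{localization}) is the tensoring trick: for a dominant weight $\nu$ with irreducible $G$-module $F_\nu$, the sheaf $F_\nu \otimes_\C \mc{F}$ carries a finite filtration whose subquotients are the geometric twists $\mc{F}(\mu)$ by the weights $\mu$ of $F_\nu$, and dominance of $\lambda$ lets one split off the extreme twist $\mc{F}(\nu)$ as a direct summand using the action of $\mc{Z}(\mf{g})$ (the generalized infinitesimal-character decomposition); cohomology vanishing is then bootstrapped from Serre vanishing for large ample twists $\mc{F}(n\nu)$. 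Your faithfulness step has the same defect: ``characteristic-variety and support estimates'' is a hope, not an argument. The true role of regularity is to make the same central-character splitting available in the opposite direction, so that $\Gamma(\mc{F})=0$ propagates to $\Gamma(\mc{F}(n\nu))=0$ for all $n \geq 0$, contradicting ampleness once $\mc{F} \neq 0$ is coherent. In short: everything you call formal is right, but both places where you defer to geometry are exactly where the proof lives, and the specific geometric route you propose for the first one provably fails.
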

Theorem \ref{localization} lets us transport the study of representations of $\mf{g}$ to the setting of $\mc{D}_\lambda$-modules, where local techniques of algebraic geometry can be employed. It is difficult to overstate the impact of this theorem on modern representation theory. Starting with its initial use to prove Kazhdan--Lusztig's conjecture that composition multiplicities of Verma modules are given by Kazhdan--Lusztig polynomials, this theorem fundamentally changed the way that questions in representation theory are approached. 

The aim of this paper is to provide concrete examples of this powerful theorem for the simplest nontrivial example, $\mf{g}=\mf{sl}(2,\C)$. We give explicit descriptions of four types of $\mc{D}_\lambda$-modules, then we compute the $\mf{g}$-module structure on their global sections to realize them as familiar representations of $\mf{g}$. Our first example (Section \ref{Finite dimensional g-modules}) of finite-dimensional $\mf{g}$-modules illustrates the classical Borel-Weil theorem for $\mf{sl}(2,\C)$. In our second example (Section \ref{Verma modules}), we realize the global sections of standard Harish-Chandra sheaves as Verma modules or dual Verma modules. Our third example (Section \ref{Admissible representations of SL(2,R)}) describes the $\mc{D}_\lambda$-modules corresponding to discrete series and principal series representations of $\SL(2,\R)$. This example illustrates two families of representations which arise in the classification of irreducible admissible representations of a real reductive Lie group via Harish-Chandra sheaves. General descriptions of this classification and its applications are discussed in \cite{BB, duality, LV, intertwining, Vogan3}. Our final example (Section \ref{Whittaker modules}) is of a $\mc{D}_\lambda$-module whose global sections have the structure of a Whittaker module. This $\mc{D}_\lambda$-module is an example of a twisted Harish-Chandra sheaf. Whittaker modules were first introduced by Kostant in \cite{Kostant}, and a geometric approach to studying them using $\mc{D}_\lambda$-modules was developed in \cite{TwistedSheaves} and used in \cite{Romanov} to establish the structure of their composition series. 

\section{Set-up}
\label{Set-up}

For the remainder of this paper, let $\mf{g}=\mf{sl}(2,\C)$ with the standard basis 
\[
 E=\bp 0 & 1 \\ 0 & 0 \ep,\hspace{2mm} H = \bp 1 & 0 \\ 0 & -1 \ep, \hspace{2mm} F = \bp 0 & 0 \\ 1 & 0 \ep.  
\]
Then $\mf{g}$ has a triangular decomposition $\mf{g} = \overline{\mf{n}} \oplus \mf{h} \oplus \mf{n}$, where $\overline{\mf{n}} = \C F, \mf{h}=\C H,$ and $\mf{n} = \C E$. Let $\mf{b}=\mf{h} \oplus \mf{n}$ be the upper triangular Borel subalgebra. Denote the corresponding complex Lie groups by $G=\SL(2,\C)$,
\[
 B=\left\{ \left.\bp a & b \\ 0 & a^{-1} \ep \right\vert a\in \C^\times, b \in \C \right\},  \text{ and } N=\left\{ \left. \bp 1 & b \\ 0 & 1 \ep \right\vert b \in \C \right\}. 
\]

Let $\Sigma^+ \subset \Sigma$ be the associated set of positive roots in the root system of $\mf{g}$. Denote the single element of $\Sigma^+$ by $\alpha$, and by $\alpha^\vee$ the corresponding coroot. Let $P(\Sigma)$ be the weight lattice in $\mf{h}^*$. The Weyl group $W$ of $\mf{g}$ is isomorphic to $\Z / 2\Z$. Let $X=G/B$ be the flag variety of $\mf{g}$. The variety $X$ is isomorphic to $\C \PP ^1$, which we identify with the set of lines through the origin in $\C^2$. (Indeed, the natural action of $G$ on $\C^2$ by matrix multiplication gives a transitive action of $G$ on the set of lines through the origin in $\C^2$, and the stabilizer of the line spanned by $(1,0)$ is $B$.) Denote the line through the point $(x_0, x_1) \in \C^2\backslash \{(0,0)\}$ by $[x_0:x_1]$, and define a map
\begin{equation}
    \label{p}
    p: \C^2 \backslash \{(0,0)\} \rightarrow X, \hspace{2mm} (x_0, x_1) \mapsto [x_0, x_1].
\end{equation}
The group $G$ acts on $X$ via the action 
\begin{equation}
    \label{G action on X}
\bp a & b \\ c & d \ep \cdot [x_0: x_1] = [ax_0 + bx_1 : cx_0 + dx_1].
\end{equation}
We distinguish the points $[0:1]$ and $[1:0]$ by labeling them 
\[
\infty=[0:1], \text{ and } 0=[1:0]. 
\]

An open cover of $X$ is given by $\{U_0, U_\infty\}$, where $U_0=\C\PP^1 - \{\infty\}$, $U_\infty = \C\PP^1 - \{0\}$. Denote by $V=U_0 \cap U_ \infty \simeq \C^\times$. We identify $U_0$ with $\C$ using the coordinate $z:U_0 \rightarrow \C$ given by $z([x_0:x_1])=x_1/x_0$ and $U_\infty$ with $\C$ via the coordinate $w([x_0:x_1]) = x_0/x_1$. On $V$, the coordinates are related by 
\begin{equation}
    \label{coordinate relation}
w = \frac{1}{z}. 
\end{equation}

Let $\mc{O}_X$ be the structure sheaf of $X$, and $\mc{D}_X$ the sheaf of differential operators. For an affine subset $U \subset X$, we denote by $R(U)$ its ring of regular functions and $D(U)$ the ring of differential operators on $R(U)$\footnote{We align our notation with \cite{d-modulesnotes}, and encourage the reader interested in more details on the theory of algebraic D-modules to consult this excellent reference.}. A $\C$-linear endomorphism $T:\mc{V} \rightarrow \mc{V}$ of an $\mc{O}_X$-module $\mc{V}$ is a {\em differential endomorphism of order $\leq n$ } if for any open set $U\subset X$ and $(n+1)$-tuple of regular functions $p_1, \ldots, p_n$ in $R(U)$, we have $[\ldots[[T,p_0],p_1],\ldots,p_n]=0$ on $U$. This generalizes the notion of a differential operator on $X$. Denote by $\mathcal{D}\mathit{iff}(\mc{V},\mc{V})$ the sheaf of differential endomorphisms of $\mc{V}$.

\section{Serre's twisting sheaves}
\label{Serre's twisting sheaves}

We start by introducing a family of sheaves of $\mc{O}_X$-modules on $X$ parameterized by the integers. These sheaves will eventually provide our first examples of $\mc{D}_\lambda$-modules in Section \ref{Finite dimensional g-modules}. 

Let $\mc{L}$ be an invertible $\mc{O}_X$-module. Then there exist isomorphisms $\varphi_0:\mc{L}|_{U_0} \xrightarrow{\sim} \mc{O}_{U_0}$ and $\varphi_{\infty}: \mc{L}|_{U_\infty} \xrightarrow{\sim} \mc{O}_{U_\infty}$. By restricting $\varphi_0$ and $\varphi_\infty$, we obtain two isomorphisms of $\mc{L}|_{V}$ with $\mc{O}_V$. Combining these we obtain an $\mc{O}_V$-module isomorphism
$\varphi:\mc{O}_V \rightarrow \mc{O}_V.$
Taking global sections results in an $R(V)$-module isomorphism $\varphi:R(V) \rightarrow R(V)$ which we call by the same name. Specifically, $\varphi$ is the $R(V)$-module isomorphism making the following diagram commute.
\[
\begin{tikzcd}
R(V)  \arrow[rightarrow, r, "\varphi"] \arrow[rightarrow, d, "\varphi_\infty^{-1}"'] & R(V)\\
\mc{L}(V) \arrow[rightarrow, r, "id"] & \mc{L}(V) \arrow[rightarrow, u, "\varphi_0"']
\end{tikzcd}
\]

Because $1$ generates $R(V)$ as an $R(V)$-module, this morphism is completely determined by the image of $1$; that is, if $\varphi(1)=p \in R(V)$, then $\varphi(q)=qp$ for any $q \in R(V)$. The morphism $\varphi$ is an isomorphism, so $\varphi^{-1}$ is also given by multiplication by a regular function $r=\varphi^{-1}(1)$, and $rp=1$. Hence $r$ and $p$ have no zeros or poles in $V$, so in the coordinate $z$, they must be of the form 
$p(z)=cz^n$ and $r(z)=\frac{1}{c} z^{-n}$ for some $c \in \C^\times$ and $n \in \Z$. We conclude that the transition function of $\mc{L}$ is of the form 
\begin{equation}
    \label{varphi} 
    \varphi: R(V) \rightarrow R(V), \hspace{2mm} 1 \mapsto cz^n.
\end{equation}

The integer $n$ determines the sheaf $\mc{L}$ up to isomorphism \cite[Cor 6.17]{Hartshorne}, so without loss of generality we can assume $c=1$. We denote the invertible sheaf corresponding to $n \in \Z$ by $\mc{O}(n)$. These are {\em Serre's twisting sheaves}. 

Next we'd like to compute the global sections of $\mc{O}(n)$. Because the transition function $\varphi$ is given by multiplication by $z^n$ and $\mc{O}(n)(U_\infty) \simeq \C[w]$ and $\mc{O}(n)(U_0)\simeq \C[z]$, a global section of $\mc{O}(n)$ is a polynomial $q(w)\in \C[w]$ and a polynomial $p(z) \in \C[z]$ such that $q(1/z)z^n=p(z)$. We can see that a pair of such polynomials only exists when $n \geq 0$, and the polynomial $q(w)$ must be of degree less than or equal to $n$. In this way, the space of global sections of $\mc{O}(n)$ for $n\geq 0$ can be identified with the vector space of polynomials of degree $\leq n$. In particular, 
\[
\dim \Gamma(\mc{O}(n), X) = \begin{cases} n+1 & \text{ if } n \geq 0; \\ 0 & \text{ if } n<0.
\end{cases}
\]

In the computations that follow, it will be useful to have a more explicit realization of the sheaf $\mc{O}(n)$ as a sheaf of homogeneous holomorphic functions on $\C^2 \backslash \{(0,0)\}$. We will describe this realization now. Fix $n \in \Z$, and let $\mc{F}_n$ be the sheaf on $X$ defined by 
\[
\mc{F}_n(U) = \begin{array}{c} \text{vector space of homogeneous}
\\
\text{holomorphic functions on }\\
\text{$p^{-1}(U)$ of degree $n$}
\end{array}
\]
for $U \subset X$ open, where $p$ is the map (\ref{p}). This is an invertible sheaf on $X$. Indeed, for our open cover $\{U_0, U_\infty\}$, we have isomorphisms 
\begin{equation}
    \label{varphi_0}
    \varphi_0:\mc{F}_n(U_0) \xrightarrow{\sim} \mc{O}_X(U_0), \hspace{2mm} f \mapsto p(z):=f(1,z)
\end{equation}
and 
\begin{equation}
    \label{varphi_infty}
    \varphi_\infty: \mc{F}_n(U_\infty) \xrightarrow{\sim} \mc{O}_X(U_\infty), \hspace{2mm} h \mapsto q(w):=h(w,1)
\end{equation}
The invertible sheaf $\mc{F}_n$ must be isomorphic to one of Serre's twisting sheaves. A quick computation using (\ref{varphi_0}) and (\ref{varphi_infty}) shows that the transition function $\varphi= \varphi_0 \circ \varphi_\infty^{-1}$ maps $q \mapsto z^n q$, so $\mc{F}_n \simeq \mc{O}(n)$. 

\section{Twisted sheaves of differential operators}
\label{Twisted sheaves of differential operators}

In \cite{BB}, Beilinson--Bernstein construct a sheaf $\mc{D}_\lambda$ of rings on $X$ for each $\lambda \in \mf{h}^*$ which is locally isomorphic to the sheaf of differential operators on $X$. If $\lambda$ is in the weight lattice, then the sheaf $\mc{D}_\lambda$ can be realized very explicitly. For concreteness, we will work in this setting. Full details of the general construction can be found in \cite[Ch. 2, \S 1]{localization}. 

Let $\lambda \in P(\Sigma)$, so $t:=\alpha^\vee(\lambda) \in \Z$. Define $\mc{D}_t$ to be the sheaf of differential endomorphisms of the $\mc{O}_X$-module $\mc{O}(t-1)$. That is, in the notation of \S\ref{Set-up}, 
\[
\mc{D}_t = \mc{D} \mathit{iff}(\mc{O}(t-1), \mc{O}(t-1)). 
\]
\begin{remark}
The $-1$ in this definition is a rho shift: if $\rho=\frac{1}{2} \alpha$, then $\alpha^\vee(\lambda - \rho)=t-1$. In general, if $X$ is the flag variety of a semisimple Lie algebra $\mf{g}$, $\rho$ is the half sum of positive roots, and $\lambda \in P(\Sigma)$, then $\mc{D}_\lambda = \mc{D}\mathit{iff}(\mc{O}(\lambda - \rho), \mc{O}(\lambda - \rho))$. 
\end{remark}

The sheaf $\mc{D}_t$ is locally isomorphic to $\mc{D}_X$, so it is an example of a  twisted sheaf of differential operators \cite[Ch. 1, \S 1]{localization}. 

Associated to $\mc{D}_t$ is a ring isomorphism 
\[
\psi:D(V) \rightarrow D(V),
\]
with the property that for $T \in D(V)$, the diagram 
\[
\begin{tikzcd}
R(V) \arrow[rightarrow, r, "\varphi"] \arrow[rightarrow, d, "T"']& R(V) \arrow[rightarrow, d, "\psi(T)"] \\
R(V) \arrow[rightarrow, r, "\varphi"] & R(V) 
\end{tikzcd}
\]
commutes. Here $\varphi(q)=z^{t-1}q$, as in equation (\ref{varphi}). This isomorphism completely determines the data of the sheaf $\mc{D}_t$, so we would like to compute it explicitly. Since $V \simeq \C^\times$, the ring $D(V)$ is generated by multiplication by $z$ and differentiation with respect to $z$, which we denote by $z$ and $\partial_z$, respectively, and $\psi$ is determined by the image of $z$ and $\partial_z$. We can see that for $q \in R(V)$,
\[
\varphi(z \cdot q) = z^{t-1}zq = zz^{t-1}q=z \cdot \varphi(q),
\]
so $\psi(z)=z$. Because $\psi$ is an isomorphism of rings of differential operators, it must preserve order, so $\psi(\partial_z) = a \partial_z + b$ for some $a,b \in R(V)$. An easy computation shows that $a=1$ and $b=-(t-1)/z$. Hence the sheaf $\mc{D}_t$ is determined up to isomorphism by the ring isomorphism 
\begin{equation}
    \label{psi} \psi: D(V) \rightarrow D(V), \hspace{2mm}  z \mapsto z, \hspace{2mm}  \partial_z \mapsto \partial_z - \frac{t-1}{z}. 
\end{equation}
One can check the equality 
\begin{equation}
    \label{conjugation to subtraction}
    \partial_z - \frac{t-1}{z}=z^{t-1}\partial_z z^{-(t-1)}
\end{equation}
of differential operators. 
\begin{remark}
If we allow $t \in \C$ to be arbitrary, equation (\ref{psi}) still defines a ring isomorphism. In contrast, the equation (\ref{varphi}) in Section \ref{Serre's twisting sheaves} only defines an $R(V)$-module isomorphism for $n \in \Z$. This reflects the fact that the sheaves $\mc{O}(n)$ are only defined for integral values of $n$, whereas we can extend the definition of $\mc{D}_t$ given above to non-integral values of $t$ by defining a sheaf of rings with gluing given by $\psi$. However, for $t \in \C \backslash \Z$, the sheaves $\mc{D}_t$ can no longer be realized as sheaves of differential endomorphisms of an $\mc{O}_X$-module. 
\end{remark}

We would like to realize global sections of $\mc{D}_t$-modules as $\mf{g}$-modules. To do this, we need to give $\mc{D}_t$ the extra structure of a {\em homogeneous} twisted sheaf of differential operators. This extra structure consists of a $G$-action on $\mc{D}_t$ and an algebra homomorphism $\beta: U(\g) \rightarrow \Gamma(X, \mc{D}_t)$ satisfying some compatibility conditions \cite[Ch. 1 \S 2]{localization}. This additional structure arises naturally if we use the explicit realization of $\mc{O}(t-1)$ as a sheaf of homogeneous holomorphic functions given in Section \ref{Serre's twisting sheaves}. 

There is a natural action of $G$ on $\mc{O}(t-1)$ given by 
\begin{equation*}
    \label{G-action}
    g \cdot f(x_0, x_1) = f (g^{-1} \cdot (x_0, x_1)),
\end{equation*}
where $g^{-1} \cdot (x_0, x_1)$ is the standard action of $G$ on $\C^2$ by matrix multiplication. We can use this to define an action of $G$ on $\mc{O}_X$ which makes the isomorphisms $\varphi_0$ and $\varphi_\infty$ (equations (\ref{varphi_0}) and (\ref{varphi_infty})) $G$-equivariant. Explicitly, if 
\[
g=\bp a & b \\ c & d \ep \in \SL(2,\C) \text{ and } p(z) \in \mc{O}_X(U_0), q(w) \in \mc{O}_X(U_\infty), 
\]
this action is given by 
\begin{equation}
\label{U_0 action}
g \cdot p(z) = (d-bz)^{t-1}p\left(\frac{-c+az}{d-bz}\right) 
\end{equation}
in the $U_0$ chart and 
\begin{equation}
    \label{U_infty action}
    g \cdot q(w)=(-cw + a) ^{t-1}q\left(\frac{dw-b}{-cw+a}\right)
\end{equation}
in the $U_\infty$ chart. Note that for $t \neq 1$, this differs from the standard action of $G$ on $\mc{O}_X$. 

One of the compatibility conditions of a homogeneous twisted sheaf of differential operators is that the $\mf{g}$-action obtained by differentiating the $G$-action on $\mc{D}_t$ should agree with the $\mf{g}$-action obtained from the map $\beta$. Hence we can compute the local $\mf{g}$-action given by $\mc{D}_t$ by differentiating the $G$-actions (\ref{U_0 action}) and (\ref{U_infty action}). Explicitly, if $X \in \mf{g}$ and $p \in \mc{O}(t-1)(U_i)$ for $i=0, \infty$, 
\begin{equation}
    \label{g action}
    X \cdot p = \left.\frac{d}{dr}\right\vert_{r=0} (\exp{rX}) \cdot p,
\end{equation}
where $(\exp{rX}) \cdot p$ is the $G$-action on $\mc{O}_X$ given by (\ref{U_0 action}) and (\ref{U_infty action}). Using (\ref{g action}), we can associate differential operators to the Lie algebra basis elements $E,F,H$ in each chart. As an example, we will include the calculation for $E$ in the $U_0$ chart:
\begin{align*}
    E \cdot p(z) &= \left. \frac{d}{dr}\right\vert_{r=0} (1-rz)^{t-1}p\left(\frac{z}{1-rz}\right)\\
    &=\left.z^2(1-rz)^{t-3}p'\left(\frac{z}{1-rz}\right) - z(t+1)(1-rz)^{t-2}p\left(\frac{z}{1-rz}\right) \right\vert_{r=0} \\
    &= z^2p'(z) - z(t-1) p(z). 
\end{align*}
We conclude from this calculation that as a differential operator on $O_X(U_0)$, the Lie algebra element $E$ acts in the coordinate $z$ as 
\[
E_0=z^2 \partial_z - z(t-1).
\]
Similar computations result in the following formulas. On the chart $U_0$, we have 
\begin{align}
\label{E_0}
    E_0 &=z^2 \partial_z - z(t-1), \\
\label{F_0}
    F_0 &= - \partial_z, \text{ and } \\ 
\label{H_0}
    H_0 &= 2z \partial_z - (t-1). 
\end{align}
On the chart $U_\infty$ we have 
\begin{align}
\label{E_infty}
    E_\infty &= -\partial_w, \\
\label{F_infty}
    F_\infty &= w^2 \partial_w - w(t-1), \text{ and } \\
\label{H_infty}
    H_\infty &= -2w \partial_w + (t-1), 
\end{align}
where $\partial_w$ is differentiation with respect to the coordinate $w$. 

The ring homomorphism $\psi$ given in equation (\ref{psi}) relates these formulas on the intersection $V$. We include this computation for $E$ as a sanity check, and encourage the suspicious reader to check the other Lie algebra basis elements. First note that by the relationship (\ref{coordinate relation}) between the coordinates $z$ and $w$, we have 
\begin{equation}
    \label{partial relationship}
\partial_w = -z^2 \partial_z. 
\end{equation}
Using (\ref{partial relationship}) and (\ref{psi}), we can compute the image of the coordinate $w$ and the derivation $\partial_w$ under $\psi$: $\psi(w)=w$ and $\psi(\partial_w)=w^{-(t-1)}\partial_ww^{t-1}$. Finally, we compute: 
\begin{align*}
    \psi(E_\infty)&=\psi(-\partial_w) \\
    &=\psi(-z^2\partial_z) \\
    &=z^2\left(\partial_z - \frac{t-1}{z} \right)\\
    &=z^2 \partial_z - z(t-1)\\
    &=E_0. 
\end{align*}
The middle equality follows from (\ref{conjugation to subtraction}). 

Using the formulas (\ref{E_0}) - (\ref{H_infty}), we can explicitly describe the $\mf{g}$-module structure on the global sections of $\mc{D}_t$-modules. The pairs $(E_0, E_\infty)$, $(F_0, F_\infty)$, and $(H_0, H_\infty)$ each define a global section of $\mc{D}_t$; in particular, they are the global sections which are the images of $E,F$, and $H$ under the map $\beta:U(\mf{g})\rightarrow \Gamma(X, \mc{D}_t)$ which gives $\mc{D}_t$ the structure of a homogeneous twisted sheaf of differential operators. In the remaining sections, we describe several families of $\mc{D}_t$-modules and use these formulas to realize their global sections as familiar $\mf{g}$-modules. 

\section{Finite dimensional modules}
\label{Finite dimensional g-modules}
With the computations of Section \ref{Twisted sheaves of differential operators}, we can realize global sections of $\mc{D}_t$-modules as $\mf{g}$-modules. To warm up, we will do so for the $\mc{D}_t$-module $\mc{O}(t-1)$. Fix $t-1 \in \Z_{\geq 0}$. 

Recall from our discussion in Section \ref{Serre's twisting sheaves} that a global section of $\mc{O}(t-1)$ is a pair $q(w) \in \C[w]$, $p(z) \in \C[z]$ of polynomials such that 
\begin{equation}
    \label{twist}
    p(z)=z^{t-1}q(1/z).
\end{equation}
The polynomial $q(w)$ in any such pair must have degree less than or equal to $t-1$, so $q \in U:= \text{span} \{ 1, w, w^2, \ldots, w^{t-1}\}$. A choice of polynomial $q \in U$ uniquely determines $p(z) \in \C[z]$ satisfying equation (\ref{twist}), so we can identify $\Gamma(X, \mc{O}(t-1))$ with $U$. We can describe the $\mf{g}$-module structure of $\Gamma(X, \mc{O}(t-1))$ by computing the action of the differential operators $E_\infty,F_\infty,H_\infty$ on a basis of $U$ using the formulas (\ref{E_infty}) - (\ref{H_infty}). We choose the basis 
\begin{equation}
    \label{basis}
\left\{u_k:=(-1)^kw^k\right\}_{k=0, \ldots , t-1}
\end{equation}
of $V$ for reasons which will soon become apparent. The action of $E_\infty, F_\infty$, and $H_\infty$ on the basis (\ref{basis}) is given by the formulas
\begin{align}
    \label{E_infty action}
    {\color{red} E_\infty} \cdot u_k &= ku_{k-1}, \\
    \label{F_infty action}
    {\color{blue} F_\infty} \cdot u_k &= ((t-1)-k)u_{k+1}, \\
    \label{H_infty action} 
    {\color{cyan} H_\infty} \cdot u_k &= ((t-1)-2k)u_k. 
\end{align}
We can capture the $\mf{g}$-module structure given by the formulas (\ref{E_infty action}) - (\ref{H_infty action}) with the picture in Figure \ref{finite dimensional picture}. In this picture, a colored arrow indicates that the corresponding differential operator sends the vector space basis element at the start of the arrow to a scalar multiple of the vector space basis element at the end of the arrow. The scalar is given in the label of the arrow. For example, the red arrow labeled $2$ represents the relationship $E_\infty \cdot u_2 = 2u_1$. This picture completely describes the $\mf{g}$-module structure of $\Gamma(X, \mc{O}(t-1))$. It is clear that with this $\mf{g}$-module structure, $\Gamma(X, \mc{O}(t-1))$ is isomorphic to the irreducible $t$-dimensional $\mf{g}$-module of highest weight $t-1$. This illustrates the classical Borel-Weil theorem for $\mf{sl}(2,\C)$.
\begin{figure}
    \centering
\begin{tikzpicture}
\matrix(m)[matrix of math nodes,
row sep=3em, column sep=2.5em,
text height=1.5ex, text depth=0.25ex]
{u_{t+1}&{u_t}&\cdots&{u_2}&{u_1}&u_0 &\\};
\path[->,font=\scriptsize]
(m-1-1) edge [bend right, red] node[below] {$t-1$} (m-1-2)
(m-1-2) edge [bend right, blue]  node[above] {$1$} (m-1-1)
(m-1-3) edge [bend right, red] node[below] {$3$}   (m-1-4) 
edge [bend right, blue] node[above] {$2$}  (m-1-2)
(m-1-2) edge [bend right, red] node[below] {$t-2$}   (m-1-3)
(m-1-2) edge [loop above, cyan] node[above] {$-t+3$} (m-1-2)
(m-1-1) edge [loop above, cyan] node[above] {$-t+1$} (m-1-1)
(m-1-4) edge [bend right, red]  node[below] {$2$}  (m-1-5)
edge [bend right, blue] node[above] {$t-3$}  (m-1-3)
(m-1-5) edge[bend right, red]  node[below] {$1$}  (m-1-6)
edge [bend right, blue]  node[above] {$t-2$} (m-1-4)
(m-1-5) edge [loop above, cyan] node[above] {$t-3$} (m-1-5)
(m-1-4) edge [loop above, cyan] node[above] {$t-5$} (m-1-4)
(m-1-6) edge [bend right, blue] node[above] {$t-1$} (m-1-5)
(m-1-6) edge [loop above, cyan] node[above] {$t-1$} (m-1-6);
\end{tikzpicture}
    \caption{Action of ${\color{red} E_\infty}$, ${\color{blue} F_\infty}$, and $\color{cyan} H_\infty$ on $\mc{O}(t-1)(U_\infty)$}
    \label{finite dimensional picture}
\end{figure}
 
\begin{theorem}
(Borel-Weil) Let $t \in \Z_{\geq 1}$. As $\mf{g}$-modules,
\[
\Gamma(X, \mc{O}(t-1)) \simeq L(t-1) 
\]
where $L(t-1)$ is the irreducible finite-dimensional representation of $\mf{g}$ of highest weight $t-1$. 
\end{theorem}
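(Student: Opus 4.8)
The plan is to compare the explicit $\mf{g}$-module structure on $\Gamma(X, \mc{O}(t-1))$ computed above with the standard description of $L(t-1)$ furnished by highest weight theory for $\mf{sl}(2,\C)$. First I would recall the identification from Section \ref{Serre's twisting sheaves}: a global section of $\mc{O}(t-1)$ is determined by a polynomial $q(w) \in \C[w]$ of degree at most $t-1$, so $\Gamma(X, \mc{O}(t-1))$ is identified with the $t$-dimensional space $U$ carrying the basis $\{u_k = (-1)^k w^k\}_{k=0}^{t-1}$. The $\mf{g}$-action is computed by applying the differential operators $E_\infty$, $F_\infty$, $H_\infty$ of (\ref{E_infty})--(\ref{H_infty}), which yields the formulas (\ref{E_infty action})--(\ref{H_infty action}); the sign twist $(-1)^k$ in the basis is exactly what normalizes these actions to standard form.

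Next I would extract the highest weight structure directly from these formulas. By (\ref{E_infty action}), $E_\infty \cdot u_0 = 0$, so $u_0$ is annihilated by $\mf{n}$, and by (\ref{H_infty action}) it is an $H$-eigenvector of weight $(t-1) - 0 = t-1$; thus $u_0$ is a highest weight vector of weight $t-1$. By (\ref{F_infty action}), repeatedly applying $F_\infty$ to $u_0$ produces nonzero multiples of $u_1, u_2, \ldots, u_{t-1}$, since the coefficient $(t-1)-k$ is nonzero for $0 \leq k \leq t-2$, and then $F_\infty \cdot u_{t-1} = 0$ because the coefficient vanishes at $k=t-1$. Concretely $F_\infty^{t-1} \cdot u_0$ is a nonzero multiple of $u_{t-1}$ while $F_\infty^{t} \cdot u_0 = 0$. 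Hence $u_0$ generates all of $U$, and the $H$-weights $(t-1)-2k$ for $k = 0, \ldots, t-1$ are precisely the weights of $L(t-1)$.

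To conclude I would invoke the classification of finite-dimensional $\mf{sl}(2,\C)$-modules: $U$ is a $t$-dimensional module, cyclic on the highest weight vector $u_0$ of dominant integral weight $t-1$, with the $F_\infty$-chain terminating exactly after $t-1$ steps. This forces $U$ to be irreducible and isomorphic to the unique irreducible module $L(t-1)$ of that highest weight. Alternatively, irreducibility can be seen directly: any nonzero submodule is $H_\infty$-stable, hence spanned by a subset of the distinct weight vectors $u_k$, and the actions (\ref{E_infty action}) and (\ref{F_infty action}) connect each $u_k$ to all the others, so the only nonzero submodule is $U$ itself.

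The computation is entirely routine, so there is no genuine obstacle; the only place demanding care is bookkeeping the signs introduced by the basis choice $u_k = (-1)^k w^k$ when verifying (\ref{F_infty action}), and confirming that the $F_\infty$-chain neither terminates prematurely nor overshoots. This latter point is what pins the dimension at exactly $t$ and, together with the single highest weight line, forces irreducibility and hence the isomorphism with $L(t-1)$.
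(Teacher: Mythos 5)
Your proposal is correct and follows essentially the same route as the paper: identify $\Gamma(X,\mc{O}(t-1))$ with the span of the basis $u_k=(-1)^kw^k$, compute the action via the operators $E_\infty$, $F_\infty$, $H_\infty$, and read off the structure of $L(t-1)$ from the resulting formulas. The only difference is that the paper declares the final identification ``clear'' from its Figure, whereas you spell out the highest-weight-vector and irreducibility argument explicitly --- a welcome addition, not a deviation.
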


\begin{remark} In the arguments above, we have done our computations in the chart $U_\infty$. However, we would have arrived at the same conclusion by working in the other chart. Applying $\varphi$ to the basis $\{u_k\}$ results in a basis $\{w_k\}$ of the vector space $W$ of polynomials in $z$ of degree at most $t-1$. Explicitly, $\varphi(u_k)=(-1)^kz^{t-1-k}=:w_k$. Computing the action of the differential operators $E_0, F_0, H_0$ (equations (\ref{E_0}) - (\ref{H_0})) on the basis $\{w_k\}$ results in an identical picture to the one above. 
\end{remark}

\section{Verma modules}
\label{Verma modules}

A natural source of $\mc{D}_t$-modules on the homogeneous space $X$ is a stratification of $X$ by orbits of a group action. In this section, we will describe two $\mc{D}_t$-modules which are constructed using the stratification of $X$ by $N$-orbits, where 
\[
N= \left \{ \left. \bp 1 & b \\ 0 & 1 \ep \right\vert b \in \C \right\}
\]
is the unipotent subgroup of $G$ from Section \ref{Set-up}. These modules are the standard Harish-Chandra sheaves associated to the Harish-Chandra pair $(g,N)$. We will see that their global sections have the structure of highest weight modules for $\mf{g}$. 

The group $N$ acts on $X$ by the restriction of the action (\ref{G action on X}). There are two orbits: the single point $0$ and the open set $U_\infty$. 
\[
\begin{tikzpicture}[scale=0.4]
  \tikzset{
    dot/.style={circle,inner sep=0.15pt,fill,label={\tiny #1},name=#1},
  }
   \begin{scope}[xshift=-5cm]
\shade[ ball color = red!40, opacity = 0.4] (0,0) circle (2);
\draw[opacity=0, name path=P2] (0,0)--(0,2); \draw[] (0,0) circle (2);
\draw[] (-2,0) arc (180:360:2 and 0.8); \draw[dashed] (2,0) arc
(0:180:2 and 0.8); \draw[name path=P1, rotate=100] (-2,0) arc
(180:360:2 and 1); \draw[rotate=100,dashed] (2,0) arc (0:180:2 and 1);
\path [name intersections={of=P1 and P2,by={N}}]; \node[dot] (N) at
(N) {};
\end{scope}

\draw[fill] (0,0) circle (0.2 and 0.08);
\node at (-1.5,0) {\( = \)};
\node at (1.5,0) {\( \sqcup \)};
\node at (0,-2) {\( 0 \)};
\node at (-5,-3) {\( X \)};
\node at (6,-3) {\( U_{\infty} \)};

 \begin{scope}[xshift=5cm]
\shade[ ball color = red!40, opacity = 0.4] (0,0) circle (2);
\draw[opacity=0, name path=P2] (0,0)--(0,2); \draw[] (0,0) circle (2);
\draw[] (-2,0) arc (180:360:2 and 0.8); \draw[dashed] (2,0) arc
(0:180:2 and 0.8); \draw[name path=P1, rotate=100] (-2,0) arc
(180:360:2 and 1); \draw[rotate=100,dashed] (2,0) arc (0:180:2 and 1);
\path [name intersections={of=P1 and P2,by={N}}];  \draw[] (N) circle (0.2 and 0.08); \fill[white] (N) circle
(0.2 and 0.08);
\end{scope}
      \end{tikzpicture}
      \]

For each $O \in \{0, U_\infty\}$, let $i_O:O \hookrightarrow X$ be inclusion. We can construct a $\mc{D}_t$-module associated to each orbit $O$ by using the $\mc{D}_t$-module direct image functor to push forward the structure sheaf:
\begin{equation}
    \label{standard HC sheaf}
    \mc{I}_O:=i_{O+}(\mc{O}_O). 
\end{equation}
Because each orbit $O$ is a homogeneous space for $N$, the sheaf $\mc{O}_O$ has a natural $N$-action. This $N$-action is compatible with the $\mc{D}_t$-action, in the sense that the differential agrees with the $\mf{n}$-action coming from the map $\beta:U(\mf{g})\rightarrow \Gamma(X, \mc{D}_t)$ when restricted to $O$, so $\mc{O}_O$ has the structure of an $N$-homogeneous connection\footnote{Irreducible $N$-homogeneous connections on $O$ are parameterized by irreducible representations of the component group of $\stab_N{x}$ for $x \in O$. In this case, $\stab_N{x}=1$, so $\mc{O}_O$ is the only $N$-homogeneous connection on $O$. In Section \ref{Admissible representations of SL(2,R)} we will see an example of a nontrivial homogeneous connection on an orbit.} on $O$. The $\mc{D}_t$-module $\mc{I}_O$ also carries a compatible $N$-action, so it is a Harish-Chandra sheaf (see \cite[\S 3]{D-modules} for a precise definition) for the Harish-Chandra pair $(\mf{g},N)$. Hence its global sections $\Gamma(X, \mc{I}_O)$ have the structure of a Harish-Chandra module\footnote{A {\em Harish-Chandra module} for the Harish-Chandra pair $(\mf{g},N)$ is a finitely generated $\mc{U}(\mf{g})$-module with an algebraic action of $N$ such that the differential of the $N$-action agrees with the $\mf{n}$-action coming from the $\mf{g}$-module structure.}. The sheaf $\mc{I}_\mc{O}$ is the {\em standard Harish-Chandra sheaf} associated to the orbit $O$ and the connection $\mc{O}_O$ \cite[Ch. 4 \S 5]{localization}. The goal of this section is to describe the $\mf{g}$-module structure on $\Gamma(X, \mc{I}_O)$ using the formulas (\ref{E_0}) - (\ref{H_infty}). 

We begin with the single point orbit $0$. In general, to describe a  $\mc{D}_t$-module $\mc{F}$, we will describe the $\mc{D}_t(U_i)$-module structure on the vector spaces $\mc{F}(U_i)$ for $i=0,\infty$. However, in this first case, it is sufficient just to work only in the chart $U_0$ because the support of the sheaf $\mc{I}_0$ is contained entirely in the chart $U_0$. (The support of $\mc{I}_0$ is $0$.)  

Let $j_0:0 \hookrightarrow U_0$ be inclusion. Then 
\[
\mc{I}_0(U_0)=j_{0+}(\mc{O}_0)(U_0) = j_{0+}(R(0)).
\]
The ring of regular functions $R(0)$ of the single point variety $0$ is isomorphic to $\C$, as is the ring $D(0)$ of differential operators. The $D(0)$-action on $R(0)$ is left multiplication. 

Because $0$ and $U_0 \simeq \C$ are both affine, we can describe the $D(U_0)$-module structure on the push-forward $j_{0+}(R(0))$ directly using the definition of the $D$-module direct image functor for polynomial maps between affine spaces \cite[Ch. 1 \S 11]{d-modulesnotes}. In the notation of \cite{d-modulesnotes}, we have 
\[
j_{0+}(R(0))=D_{U_0 \leftarrow 0} \otimes_{D(0)}R(0).
\]
Because $D(0)\simeq R(0) \simeq \C$, this is isomorphic to the left $D(U_0)$-module 
\[
D_{U_0 \leftarrow 0}=R(0) \otimes_{R(U_0)}D(U_0). 
\]
Here the $D(U_0)$-module structure on $D_{U_0 \leftarrow 0}$ is given by right multiplication on the second tensor factor by the transpose \cite[Ch. 1 \S5]{d-modulesnotes} of a differential operator $T \in D(U_0)$. As an $R(U_0)$($=\C[z]$)-module, $R(0)$ is isomorphic to $\C[z]/z\C[z]$. Hence the $D(U_0)$-module $j_{0+}(R(0))$ is isomorphic to the $D(U_0)$-module \begin{equation}
    \label{N}
D(U_0)/D(U_0)z = \bigoplus_{i \geq 0} \partial_z^i \delta,
\end{equation}
where $\delta:\C \rightarrow \C, 0 \mapsto 1, x \neq 0 \mapsto 0$ is the Dirac indicator function. 

From this discussion, we see that to describe the $\mf{g}$-module structure on $\Gamma(X, i_{0+}(\mc{O}_0))$, it suffices to compute the actions of the differential operators $E_0, F_0, H_0$ on a basis for the $D(U_0)$-module in (\ref{N}). We choose the basis 
\[
\left\{m_k:=\frac{(-1)^k}{k!} \partial_z^k  \delta\right\}_{k \in \Z_{\geq 0}} 
\]
of (\ref{N}). We include the computation of the $E_0$ action as an example, then record the remaining formulas. Let $k \in \Z_{>0}$. Using the relationships $[\partial_z,z]= \partial_z z - z \partial_z = 1$ and $z \delta = 0$, we compute:
\begin{align*}
   E_0 \cdot m_k &= (z^2 \partial_z - z(t-1)) \frac{(-1)^k}{k!} \partial_z^k \delta \\
    &= \frac{(-1)^k}{k!} z^2 \partial_z^{k+1} \delta - \frac{(-1)^k(t-1)}{k!} z \partial_z^k \delta \\
    &= \frac{(-1)^k}{k!}z(\partial_z^{k+1}z \delta - (k+1) \partial_z^k \delta) + \frac{(-1)^{k+1}(t-1)}{k!}(\partial_z^k z\delta - k \partial_z^{k-1}\delta)  \\
    &= \frac{(-1)^{k+1}(k+1)}{k!}(\partial_z^kz \delta - k \partial_z ^{k-1} \delta) + \frac{(-1)^k(t-1)}{(k-1)!} \partial_z^{k-1} \delta \\
    &= \frac{(-1)^k(k+1)}{(k-1)!} \partial_z^{k-1}\delta + \frac{(-1)^k(t-1)}{(k-1)!}\partial_z^{k-1} \delta \\
    &=( -t-k) m_{k-1}.
\end{align*}
Similar computations for $F_0$ and $H_0$ lead to the following formulas.
\begin{align}
    \label{E_0 on N}
    {\color{red} E_0} \cdot m_k &= (-t-k)m_{k-1}  \text{ for }k \neq 0, {\color{red} E_0} \cdot m_0 = 0 \\
        \label{F_0 on N}
    {\color{blue} F_0} \cdot m_k &= (k+1) m_{k+1} \\ 
        \label{H_0 on N}
    {\color{cyan} H_0} \cdot m_k &= (-t-1-2k)m_k
\end{align}
As in Section \ref{Finite dimensional g-modules}, we can capture this $\mf{g}$-module structure with the picture in Figure \ref{irred verma picture}. From the formulas (\ref{E_0 on N}) - (\ref{H_0 on N}) and Figure \ref{irred verma picture}, we see that for $t\in \Z_{\geq 1}$, $\Gamma(X,\mc{I}_O)$ is an irreducible Verma module of highest weight $-t-1$. 
\begin{figure}
    \centering
\begin{tikzpicture}
\matrix(m)[matrix of math nodes,
row sep=3em, column sep=2.5em,
text height=1.5ex, text depth=0.25ex]
{\cdots&{m_k}&\cdots&{m_2}&{m_1}&m_0 &\\};
\path[->,font=\scriptsize]
(m-1-1) edge [bend right, red] node[below] {$-t-1-k$} (m-1-2)
(m-1-2) edge [bend right, blue]  node[above] {$k+1$} (m-1-1)
(m-1-3) edge [bend right, red] node[below] {$-t-3$}   (m-1-4) 
edge [bend right, blue] node[above] {$k$}  (m-1-2)
(m-1-2) edge [bend right, red] node[below] {$-t-k$}   (m-1-3)
(m-1-2) edge [loop above, cyan] node[above] {$-t-1-2k$} (m-1-2)
(m-1-4) edge [bend right, red]  node[below] {$-t-2$}  (m-1-5)
edge [bend right, blue] node[above] {$3$}  (m-1-3)
(m-1-5) edge[bend right, red]  node[below] {$-t-1$}  (m-1-6)
edge [bend right, blue]  node[above] {$2$} (m-1-4)
(m-1-5) edge [loop above, cyan] node[above] {$-t-3$} (m-1-5)
(m-1-4) edge [loop above, cyan] node[above] {$-t-5$} (m-1-4)
(m-1-6) edge [bend right, blue] node[above] {$1$} (m-1-5)
(m-1-6) edge [loop above, cyan] node[above] {$-t-1$} (m-1-6);
\end{tikzpicture}
    \caption{Action of ${\color{red} E_0}$, ${\color{blue}F_0}$, and ${\color{cyan}H_0}$ on $\mc{I}_0(U_0)$}
    \label{irred verma picture}
\end{figure}

\begin{theorem}
\label{irreducible verma}
 Let $t \in \Z_{\geq 1}$ and $\mc{I}_0$ the standard Harish-Chandra sheaf for $\mc{D}_t$ attached to the closed $N$-orbit $0 \in X$. Then as $\mf{g}$-modules,
\[
\Gamma(X, \mc{I}_0) \simeq M(-t-1), 
\]
where $M(-t-1)$ is the irreducible Verma module of highest weight $-t-1$. 
\end{theorem}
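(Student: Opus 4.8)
The plan is to leverage the local computation already carried out: the formulas (\ref{E_0 on N})--(\ref{H_0 on N}) essentially do all the work, and what remains is to recognize the resulting $\mf{g}$-module as a Verma module. First I would reduce the computation of global sections to the single chart $U_0$. Since $\mc{I}_0$ is supported on the point $0$, which lies in $U_0$ but not in $U_\infty$ nor in $V = U_0 \cap U_\infty$, the restriction $\mc{I}_0|_{U_\infty}$ vanishes and the compatibility condition on $V$ in the sheaf gluing for the cover $\{U_0, U_\infty\}$ is vacuous. Hence $\Gamma(X, \mc{I}_0) \cong \mc{I}_0(U_0)$ as $\mf{g}$-modules, with the action given by the differential operators $E_0, F_0, H_0$ of (\ref{E_0})--(\ref{H_0}).

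Next I would isolate the highest weight vector. From (\ref{E_0 on N}) and (\ref{H_0 on N}) the vector $m_0 = \delta$ satisfies $E_0 \cdot m_0 = 0$ and $H_0 \cdot m_0 = (-t-1) m_0$, so it is a highest weight vector of weight $-t-1$. Moreover (\ref{F_0 on N}) gives $F_0^k \cdot m_0 = k!\, m_k$, so $m_0$ generates $\mc{I}_0(U_0)$ as a $\mf{g}$-module, and $\{m_k\}_{k \geq 0}$ is a weight basis with $m_k$ of weight $-t-1-2k$, each weight space being one-dimensional.

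With this in hand I would produce the isomorphism. By the universal property of Verma modules there is a unique $\mf{g}$-module homomorphism $M(-t-1) \to \mc{I}_0(U_0)$ sending the highest weight generator $v_0$ to $m_0$; under it $v_k := F^k v_0 \mapsto F_0^k m_0 = k!\, m_k$. This map is surjective because $m_0$ generates the target. For injectivity I would invoke the fact that for $\mf{sl}(2,\C)$ the Verma module $M(\mu)$ is irreducible whenever $\mu \notin \Z_{\geq 0}$; since $-t-1 \leq -2$ for $t \in \Z_{\geq 1}$, the module $M(-t-1)$ is irreducible, so any nonzero homomorphism out of it has trivial kernel. Alternatively, one can avoid citing irreducibility: the map is a surjection of weight modules with one-dimensional weight spaces supported on the same weights, hence an isomorphism, and irreducibility then follows a posteriori from (\ref{E_0 on N}), since every coefficient $-t-k$ is nonzero and there is no singular vector.

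The routine verification — that $v_k \mapsto k!\,m_k$ intertwines all three generators, matching (\ref{E_0 on N})--(\ref{H_0 on N}) against the standard Verma formulas $H v_k = (\mu - 2k) v_k$, $F v_k = v_{k+1}$, $E v_k = k(\mu - k + 1) v_{k-1}$ with $\mu = -t-1$ — presents no difficulty. The only point demanding care is the factorial normalization $v_k = k!\, m_k$, which is exactly what turns the unnormalized coefficient $-t-k$ of (\ref{E_0 on N}) into $k(\mu - k + 1) = k(-t-k)$ after rescaling. I do not expect a genuine obstacle here; the one conceptual step, as opposed to bookkeeping, is the reduction $\Gamma(X, \mc{I}_0) \cong \mc{I}_0(U_0)$ from the support of the sheaf, after which everything downstream is the verification just described.
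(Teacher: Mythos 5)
Your proposal is correct and follows essentially the same route as the paper: reduce to the chart $U_0$ via the support of $\mc{I}_0$, then read off the module structure from the formulas (\ref{E_0 on N})--(\ref{H_0 on N}). The only difference is that the paper simply asserts the identification with the irreducible Verma module by inspection of those formulas and Figure \ref{irred verma picture}, whereas you make that step explicit (highest weight vector $m_0$, universal property of $M(-t-1)$, surjectivity from $F_0^k\cdot m_0=k!\,m_k$, injectivity from irreducibility or the matching one-dimensional weight spaces) --- a correct and slightly more rigorous rendering of the same argument.
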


\begin{remark}
\label{singular infinitesimal character}
One can see from an inspection of formulas (\ref{E_0 on N}) - (\ref{H_0 on N}) and Figure \ref{irred verma picture} that Theorem \ref{irreducible verma} also holds when $t=0$. In this setting, the Verma module $M(-1)$ has singular infinitesimal character, so the statement is not an example of Theorem \ref{localization}. (This is why we don't include $t=0$ in the statement of Theorem \ref{irreducible verma}.) 
\end{remark}

Next we examine the standard Harish-Chandra sheaf attached to the open orbit $U_\infty$. Let $i_\infty: U_\infty \hookrightarrow X$ be inclusion. To describe the $\mc{D}_t$-module 
\[
\mc{I}_{U_\infty}:=i_{\infty +}( \mc{O}_{U_\infty}),
\]
we will compute the local $\mf{g}$-module structure of the vector spaces $\mc{I}_{U_\infty}(U_0)$ and $\mc{I}_{U_\infty}(U_\infty)$ using the formulas (\ref{E_0}) - (\ref{H_infty}). In the chart $U_\infty$, we have 
\begin{equation}
    \label{M}
    \mc{I}_{U_\infty}(U_\infty)=R(U_\infty)=\C[w]. 
\end{equation}
We choose the basis 
\begin{equation}
    \label{basis m}
    \left\{ n_k:=(-1)^k w^k \right\}_{k \in \Z_\geq 0}
\end{equation}
of (\ref{M}). The $E_\infty, F_\infty, H_\infty$ actions on the basis (\ref{basis m}) are given by 
\begin{align}
    \label{E_infty on M}
    {\color{red} E_\infty} \cdot n_k &= k n_{k-1} \\
    \label{F_infty on M}
    {\color{blue} F_\infty} \cdot n_k & = ((t-1)-k)n_{k+1}\\ 
    \label{H_infty on M}
    {\color{cyan} H_\infty} \cdot n_k &= ((t-1)-2k) n_k
\end{align}

Hence the $\mf{g}$-module structure of the vector space $\mc{I}_{U_\infty}(U_\infty)$ is given by Figure \ref{picture of M}.

\begin{figure}
\centering
\begin{tikzpicture}
\matrix(m)[matrix of math nodes,
row sep=3em, column sep=2.5em,
text height=1.5ex, text depth=0.25ex]
{\cdots&{n_k}&\cdots&{n_2}&{n_1}&n_0 &\\};
\path[->,font=\scriptsize]
(m-1-1) edge [bend right, red] node[below] {$k+1$} (m-1-2)
(m-1-2) edge [bend right, blue]  node[above] {$t-1-k$} (m-1-1)
(m-1-3) edge [bend right, red] node[below] {$3$}   (m-1-4) 
edge [bend right, blue] node[above] {$t-k$}  (m-1-2)
(m-1-2) edge [bend right, red] node[below] {$k$}   (m-1-3)
(m-1-2) edge [loop above, cyan] node[above] {$t-1-2k$} (m-1-2)
(m-1-4) edge [bend right, red]  node[below] {$2$}  (m-1-5)
edge [bend right, blue] node[above] {$t-3$}  (m-1-3)
(m-1-5) edge[bend right, red]  node[below] {$1$}  (m-1-6)
edge [bend right, blue]  node[above] {$t-2$} (m-1-4)
(m-1-5) edge [loop above, cyan] node[above] {$t-3$} (m-1-5)
(m-1-4) edge [loop above, cyan] node[above] {$t-5$} (m-1-4)
(m-1-6) edge [bend right, blue] node[above] {$t-1$} (m-1-5)
(m-1-6) edge [loop above, cyan] node[above] {$t-1$} (m-1-6);
\end{tikzpicture}
\caption{Action of ${\color{red}E_\infty}$, ${\color{blue} F_\infty}$, and ${\color{cyan} H_\infty}$ on $\mc{I}_{U_\infty}(U_\infty)$}
\label{picture of M}
\end{figure}

It remains to describe the $\mf{g}$-module structure in the other chart $U_0$. Let $k_0:V \hookrightarrow U_0$ and $k_\infty: V \hookrightarrow U_\infty$ be inclusion. Then
\begin{align*}
    \mc{I}_{U_\infty}(U_0)&=k_{0+}k_\infty^+(R(U_\infty))\\
    &=k_{0+}\left(R(V) \otimes_{R(U_\infty)} R(U_\infty)\right)\\
    &=k_{0+} \left( \C[w,w^{-1}] \otimes_{\C[w]} \C[w]\right) \\
    &= k_{0+}\left(\C[w, w^{-1}]\right).
\end{align*}
Because the map $k_0$ is an open immersion, the direct image $k_{0+}(\C[w, w^{-1}])=\C[w, w^{-1}]$ as a vector space, with $D(U_0)$-module structure given by the restriction of the $D(V)$-action to the subring $D(U_0)\subset D(V)$. Hence, as a $D(U_0)$-module, 
\begin{equation}
    \label{M in other chart}
    \mc{I}_{U_\infty}(U_0) = \C[z,z^{-1}].
\end{equation}
We choose the basis 
\[
\left\{ n_k:=(-1)^kz^{-k}\right\}_{k \in \Z}
\]
of (\ref{M in other chart}) to align with the basis (\ref{basis m}) for $\C[w]$ given above. The actions of $E_0, F_0, H_0$ on $\mc{I}_{U_\infty}(U_0)$ are given by the formulas 
\begin{align}
    \label{E_0 on M}
    {\color{red} E_0} \cdot n_k &= (t-1+k)n_{k-1}, \\
    \label{F_0 on M}
    {\color{blue} F_0} \cdot n_k & = -kn_{k+1},\\ 
    \label{H_0 on M}
    {\color{cyan} H_0} \cdot n_k &= (-(t-1)-2k) n_k.
\end{align}
Figure \ref{other picture of M} illustrates this action.
\begin{figure}
\centering
\begin{tikzpicture}
\matrix(m)[matrix of math nodes,
row sep=3em, column sep=2.5em,
text height=1.5ex, text depth=0.25ex]
{\cdots&{n_2}&{n_1}&{n_0}&{n_{-1}}&n_{-2} &\cdots\\};
\path[->,font=\scriptsize]
(m-1-1) edge [bend right, red] node[below] {$t+2$} (m-1-2)
(m-1-2) edge [bend right, blue]  node[above] {$-2$} (m-1-1)
(m-1-3) edge [bend right, red] node[below] {$t$}   (m-1-4) 
edge [bend right, blue] node[above] {$-1$}  (m-1-2)
(m-1-7) edge [bend right, blue] node[above] {$3$}  (m-1-6)
(m-1-2) edge [bend right, red] node[below] {$t+1$}   (m-1-3)
(m-1-2) edge [loop above, cyan] node[above] {$-t-3$} (m-1-2)
(m-1-3) edge [loop above, cyan] node[above] {$-t-1$} (m-1-3)
(m-1-4) edge [bend right, red]  node[below] {$t-1$}  (m-1-5)
(m-1-6) edge[bend right, red]  node[below] {$t-3$}  (m-1-7)
(m-1-5) edge[bend right, red]  node[below] {$t-2$}  (m-1-6)
edge [bend right, blue]  node[above] {$1$} (m-1-4)
(m-1-5) edge [loop above, cyan] node[above] {$-t+3$} (m-1-5)
(m-1-4) edge [loop above, cyan] node[above] {$-t+1$} (m-1-4)
(m-1-6) edge [bend right, blue] node[above] {$2$} (m-1-5)
(m-1-6) edge [loop above, cyan] node[above] {$-t+5$} (m-1-6);
\end{tikzpicture}
\caption{Action of ${\color{red}E_0}$, ${\color{blue} F_0}$, and ${\color{cyan} H_0}$ on $\mc{I}_{U_\infty}(U_0)$}
\label{other picture of M}
\end{figure}

With this information, we can describe the $\mf{g}$-module structure on $\Gamma(X,\mc{I}_{U_\infty})$. A global section of $\mc{I}_{U_\infty}$ is a pair of functions $q(w) \in \C[w]$, $p(z) \in \C[z, z^{-1}]$ such that $q(1/z)=p(z)$. A Lie algebra basis element $X \in \{E,F,H\}$ acts on this global section by
\[
X \cdot (q, p) = (X_\infty \cdot q, X_0 \cdot p),
\]
where the actions of $X_\infty$ and $X_0$ are those given in Figures \ref{picture of M} and \ref{other picture of M} and equations (\ref{E_infty on M}) - (\ref{H_0 on M}). By construction, these actions are compatible on the intersection; that is, on $V$,
\[
X_\infty \cdot q (1/z) = \psi(X_\infty) \cdot p(z)= X_0 \cdot p(z),
\]
where $\psi$ is the ring isomorphism \ref{psi} which defines $\mc{D}_t$.
Because $\C[w] \subset \C[z, z^{-1}]$, a choice of a polynomial in $\C[w]$ uniquely determines a global section, so the space of global sections can be identified with $\C[w]$. Hence, as a $\mf{g}$-module, $\Gamma(X, \mc{I}_{U_\infty}) \simeq \C[w]$ with action as in Figure \ref{picture of M}. We can see from formulas (\ref{E_infty on M}) - (\ref{H_infty on M}) that this $\mf{g}$-module is the dual Verma module of highest weight $t-1$. It has an irreducible $t$-dimensional submodule spanned by $\{n_0, \ldots n_{t-1}\}$. 
\begin{theorem}
 Let $t \in \Z_{\geq 1}$ and $\mc{I}_{U_\infty}$ the standard Harish-Chandra sheaf for $\mc{D}_t$ attached to the open $N$-orbit $U_\infty \subset X$. Then as $\mf{g}$-modules, 
\[
\Gamma(X, \mc{I}_{U_\infty}) \simeq I(t-1),
\]
where $I(t-1)$ is the dual Verma module of highest weight $t-1$. 
\end{theorem}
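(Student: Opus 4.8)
The plan is to leverage the explicit local computations already in hand: by the discussion preceding the statement, $\Gamma(X,\mc{I}_{U_\infty})$ is identified as a vector space with $\C[w]$, and the $\mf{g}$-action is exactly the one recorded in equations (\ref{E_infty on M})--(\ref{H_infty on M}) and Figure \ref{picture of M}. So the entire content of the theorem is to recognize this concrete module as the dual Verma module $I(t-1)$, and I would do this by reading off its submodule structure directly from those formulas.

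First I would locate the highest weight data. From (\ref{E_infty on M}) and (\ref{H_infty on M}) the vector $n_0$ satisfies $E_\infty \cdot n_0 = 0$ and $H_\infty \cdot n_0 = (t-1)n_0$, so $n_0$ is a highest weight vector of weight $t-1$; moreover the weights $t-1-2k$ for $k\geq 0$ each occur with multiplicity one, so $\ch \Gamma(X,\mc{I}_{U_\infty}) = \ch M(t-1)$. The decisive observation is then the single special value in (\ref{F_infty on M}): since $F_\infty \cdot n_{t-1} = ((t-1)-(t-1))n_t = 0$, the span of $\{n_0,\dots,n_{t-1}\}$ is stable under $E_\infty, F_\infty, H_\infty$ and hence is a $\mf{g}$-submodule. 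Comparing its action with that of Section \ref{Finite dimensional g-modules}, this submodule is isomorphic to $L(t-1)$. Thus $L(t-1)$ sits inside $\Gamma(X,\mc{I}_{U_\infty})$ as a submodule rather than a quotient, which is precisely the feature distinguishing a dual Verma module from a Verma module.

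To pin down the isomorphism type I would next compute the quotient and the socle. On the images $\bar n_k$ for $k \geq t$, the relation $E_\infty \cdot \bar n_t = t\,\bar n_{t-1} = 0$ shows that $\bar n_t$ is a highest weight vector of weight $-t-1$, while (\ref{F_infty on M}) shows $F_\infty$ acts injectively on the tail, so the quotient is the irreducible Verma module $M(-t-1)$; hence $\Gamma(X,\mc{I}_{U_\infty})$ is an extension $0 \to L(t-1) \to \Gamma(X,\mc{I}_{U_\infty}) \to M(-t-1) \to 0$. This extension is non-split and the socle is exactly $L(t-1)$: the only candidate for a highest weight vector of weight $-t-1$ is $n_t$, whose weight space is one-dimensional, and $E_\infty \cdot n_t = t\,n_{t-1} \neq 0$ since $t \geq 1$, so $M(-t-1)$ does not embed as a submodule. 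A module in category $\mc{O}$ with $\ch = \ch M(t-1)$ and simple socle $L(t-1)$ is forced to be $M(t-1)^\vee = I(t-1)$, which gives the theorem.

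I expect the only delicate point to be this last identification step, namely certifying that the module is the dual Verma and not merely some module with the same two composition factors. The clean way around it is the socle-and-character characterization above, whose hypotheses I have verified explicitly; alternatively one could exhibit an isomorphism $\Gamma(X,\mc{I}_{U_\infty}) \xrightarrow{\sim} M(t-1)^\vee$ directly, sending each $n_k$ to a suitable scalar multiple of the dual basis vector $v_k^*$ under the Chevalley anti-involution ($E \leftrightarrow F$, $H \mapsto H$), which reduces to a finite check of matching scalars.
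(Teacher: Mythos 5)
Your proposal is correct and takes essentially the same approach as the paper: both start from the identification of $\Gamma(X,\mc{I}_{U_\infty})$ with $\C[w]$ carrying the explicit action (\ref{E_infty on M})--(\ref{H_infty on M}), and both hinge on the observation that the span of $n_0,\dots,n_{t-1}$ is a $t$-dimensional irreducible submodule, which is exactly what distinguishes the dual Verma module from the Verma module. The only difference is one of rigor: where the paper identifies the module as $I(t-1)$ by inspection of the formulas, you certify it via the quotient $M(-t-1)$, the simple-socle computation, and uniqueness of the resulting non-split extension in category $\mc{O}$, which is a sound tightening of the same argument.
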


\section{Admissible representations of $\SL(2,\R)$}
\label{Admissible representations of SL(2,R)}

In Section \ref{Verma modules}, we described the $\mc{D}_t$-modules corresponding to Verma modules and dual Verma modules. These $\mc{D}_t$-modules were the standard Harish-Chandra sheaves for the Harish-Chandra pair $(\mf{g},N)$. In this section, we will describe four $\mc{D}_t$-modules which are constructed in a similar way from $K$-orbits on $X$, where 
\[
K = \left\{ \left. \bp a & 0 \\ 0 & a^{-1} \ep \right\vert a \in \C^\times \right\}. 
\]
These are the standard Harish-Chandra sheaves for the Harish-Chandra pair ($\mf{g},K)$. The group $K$ is the complexification of the maximal compact subgroup of the real Lie group 
\[
\SU(1,1)= \left\{  M \in \SL(2,\C) \left\vert M \bp 1 & 0 \\ 0 & -1 \ep M^* = \bp 1 & 0 \\ 0 & -1 \ep \right. \right\}, 
\]
which is isomorphic to $\SL(2,\R)$. We will see that the global sections of the standard Harish-Chandra sheaves described in this section are the Harish-Chandra modules attached to discrete series and principal series representations of $\SL(2,\R)$. 

The group $K$ acts on $X$ by restriction of the action (\ref{G action on X}). There are three orbits: the two single-point orbits $0$ and $\infty$, and the open orbit $V\simeq \C^\times$. 
\begin{center}
\begin{tikzpicture}[scale=0.4]
  \tikzset{
    dot/.style={circle,inner sep=0.15pt,fill,label={\tiny #1},name=#1},
  }
   \begin{scope}[xshift=-5cm]
\shade[ ball color = red!40, opacity = 0.4] (0,0) circle (2);
\draw[opacity=0, name path=P2] (0,0)--(0,2);
\draw[opacity=0, name path=P4] (0,0)--(0,-2);
\draw[] (0,0) circle (2);
\draw[] (-2,0) arc (180:360:2 and 0.8);
\draw[dashed] (2,0) arc (0:180:2 and 0.8);
\draw[name path=P1, rotate=100] (-2,0) arc (180:360:2 and 1);
\draw[rotate=100,dashed,name path=P3] (2,0) arc (0:180:2 and 1);
\path [name intersections={of=P1 and P2,by=N}]; \node[dot] at
(N) {};
\path [name intersections={of=P3 and P4, by=S}];
\node[dot] at (S) {};

\end{scope}

\draw[fill] (0,0) circle (0.2 and 0.08);
\node at (-1.5,0) {\( = \)};
\node at (1.5,0) {\( \sqcup \)};
\node at (0,-2) {\( 0 \)};
\node at (-5,-3) {$X$};
\node at (5.5,-3) {$V$};
\node at (8.5,0) {\( \sqcup \)};
\draw[fill] (10,0) circle (0.2 and 0.08);
\node at (10,-2) {\( \infty \)};

 \begin{scope}[xshift=5cm]
 \shade[ ball color = red!40, opacity = 0.4] (0,0) circle (2);
\draw[opacity=0, name path=P2] (0,0)--(0,2);
\draw[opacity=0, name path=P4] (0,0)--(0,-2);
\draw[] (0,0) circle (2);
\draw[] (-2,0) arc (180:360:2 and 0.8);
\draw[dashed] (2,0) arc (0:180:2 and 0.8);
\draw[name path=P1, rotate=100] (-2,0) arc (180:360:2 and 1);
\draw[rotate=100,dashed,name path=P3] (2,0) arc (0:180:2 and 1);
\path [name intersections={of=P1 and P2,by=N}]; 
\path [name intersections={of=P3 and P4, by=S}];
\draw[fill=white, very thin] (N) circle (0.2 and 0.08);
\draw[opacity=0.4, very thin] (S) circle (0.2 and 0.08);
\fill[white] (S) circle (0.2 and 0.08);
\draw[fill=red!20,opacity=0.4, very thin] (S) circle (0.2 and 0.08);
\end{scope}
      \end{tikzpicture}
\end{center}

We will begin by describing the standard Harish-Chandra sheaves constructed from the closed orbits. The standard Harish-Chandra sheaf associated to the orbit $0$ is \[
\mc{I}_0:=i_{0+}(\mc{O}_0).
\]
We described the structure of this $\mc{D}_t$-module in Section \ref{Verma modules}. The $\mf{g}$-module structure on its space of global sections is given by formulas (\ref{E_0 on N}) - (\ref{H_0 on N}) and Figure \ref{irred verma picture}.

The standard Harish-Chandra sheaf 
\[
\mc{I}_\infty:=i_{\infty +}(\mc{O}_\infty)
\]
attached to the closed orbit $\infty$ has a similar structure. As it is supported entirely in the chart $U_\infty$, it suffices to describe only $\mc{I}_\infty(U_\infty)$. By analogous arguments to those in Section \ref{Verma modules}, we have 
\[
\mc{I}_\infty(U_\infty) = \bigoplus_{i \geq 0} \partial_w^i \delta,
\]
where $\delta$ is the Dirac indicator function. The actions of $E_\infty, F_\infty$, and $H_\infty$ on the basis 
\[
\left\{ d_k:=\frac{(-1)^k}{k!} \partial_w^k \delta \right\}_{k \in \Z_{\geq 0}}
\]
are given by the formulas 
\begin{align}
    \label{E_infty on D}
    {\color{red} E_\infty} \cdot d_k &= (k+1) d_{k+1}, \\
        \label{F_infty on D}
    {\color{blue} F_\infty} \cdot d_k &= (-t-k) d_{k-1} \text{ for } k>0, \hspace{2mm} {\color{blue} F_\infty} \cdot d_0 = 0, \\
        \label{H_infty on D}
    {\color{cyan} H_\infty} \cdot d_k &=(t+1+2k)d_k. 
\end{align}
Figure \ref{picture of D} illustrates this $\mf{g}$-module structure. We can see that for $t \in \Z_{\geq 1}$, $\Gamma(X, \mc{I}_\infty)$ is an irreducible lowest weight module with lowest weight $t+1$. 

\begin{theorem}
\label{discrete series}
Let $t \in \Z_{\geq 1}$ and $\mc{I}_0, \mc{I}_\infty$ the standard Harish-Chandra sheaves for $\mc{D}_t$ attached to the closed $K$-orbits $0$ and $\infty$, respectively. Then as $\mf{g}$-modules, 
\begin{align*}
    \Gamma(X, \mc{I}_0) &\simeq D_+(-t-1), \text{ and } \\
    \Gamma(X, \mc{I}_\infty) &\simeq D_-(t+1),
\end{align*}
where $D_+(-t-1)$ is the Harish-Chandra module of the (holomorphic) discrete series representation of $\SL(2,\R)$ of highest weight $-t-1$ and $D_-(t+1)$ is the Harish-Chandra module of the (antiholomorphic) discrete series representation of $\SL(2,\R)$ of lowest weight $t+1$. Both of these modules are irreducible. 
\end{theorem}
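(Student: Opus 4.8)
The plan is to exploit the fact that, as $\mc{D}_t$-modules, both $\mc{I}_0$ and $\mc{I}_\infty$ are direct images of the structure sheaf from a single closed point, and their underlying module structure is insensitive to whether that point is regarded as an $N$-orbit or a $K$-orbit. For $\mc{I}_0 = i_{0+}(\mc{O}_0)$ this means the computation is already complete: the global sections carry the $\mf{g}$-action recorded in formulas (\ref{E_0 on N})--(\ref{H_0 on N}) and depicted in Figure \ref{irred verma picture}, exactly as in Section \ref{Verma modules}. For $\mc{I}_\infty = i_{\infty+}(\mc{O}_\infty)$, whose support is the point $\infty \in U_\infty$, I would repeat the argument of Section \ref{Verma modules} verbatim in the chart $U_\infty$, using the direct-image formula for a point embedded in affine space, to obtain $\mc{I}_\infty(U_\infty) = \bigoplus_{i \geq 0} \partial_w^i \delta$ with the $E_\infty, F_\infty, H_\infty$ action in formulas (\ref{E_infty on D})--(\ref{H_infty on D}).

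With the $\mf{g}$-module structures in hand, the heart of the proof is to recognize them as the Harish-Chandra modules of the discrete series. From Figure \ref{irred verma picture}, $\Gamma(X, \mc{I}_0)$ is a highest weight module: the vector $m_0$ satisfies $E_0 \cdot m_0 = 0$ and has weight $-t-1$, and $F_0$ descends freely through the one-dimensional weight spaces of weights $-t-1, -t-3, -t-5, \ldots$. These are precisely the $K$-types of the holomorphic discrete series $D_+(-t-1)$. Symmetrically, formulas (\ref{E_infty on D})--(\ref{H_infty on D}) exhibit $\Gamma(X, \mc{I}_\infty)$ as a lowest weight module with lowest weight vector $d_0$ of weight $t+1$ and $K$-types $t+1, t+3, t+5, \ldots$, matching the antiholomorphic discrete series $D_-(t+1)$. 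I would make this rigorous by recalling the standard $K$-type description of $D_\pm$ together with the action of the raising and lowering operators $E, F$, and then checking that the structure constants agree; since every weight space is one-dimensional, an isomorphism is forced once the weights match.

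It remains to confirm irreducibility. For $\mc{I}_0$ this is supplied by Theorem \ref{irreducible verma}, which identifies $\Gamma(X, \mc{I}_0)$ with the irreducible Verma module $M(-t-1)$ for $t \geq 1$. For $\mc{I}_\infty$ I would argue directly from the formulas: the coefficient $k+1$ in $E_\infty \cdot d_k$ never vanishes, and the coefficient $-t-k$ in $F_\infty \cdot d_k$ is nonzero for every $k > 0$ because $t \geq 1$; hence the submodule generated by any single basis vector $d_k$ contains every $d_j$, so there are no proper nonzero submodules.

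The main obstacle I anticipate is conceptual rather than computational. The $\mc{D}$-module side reduces entirely to the point-pushforward computation already carried out in Section \ref{Verma modules}, so the real work lies in correctly recalling the realization of the discrete series Harish-Chandra modules $D_\pm$ and verifying that the highest/lowest weight and the spacing of $K$-types produced geometrically agree with them. This demands careful attention to sign conventions and to the $\rho$-shift built into the definition of $\mc{D}_t$; once that dictionary is fixed, the isomorphisms drop out immediately from the one-dimensionality of the weight spaces.
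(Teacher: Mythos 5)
Your proposal is correct and takes essentially the same approach as the paper: it reuses the Section \ref{Verma modules} computation for $\mc{I}_0$ (noting the underlying $\mc{D}_t$-module is unchanged when the point is viewed as a $K$-orbit), repeats the mirrored $\delta$-function computation in the chart $U_\infty$ to get formulas (\ref{E_infty on D})--(\ref{H_infty on D}), and then identifies the resulting irreducible highest/lowest weight modules with $D_+(-t-1)$ and $D_-(t+1)$. The extra details you supply (matching $K$-types and the nonvanishing-coefficient irreducibility argument) merely make explicit what the paper leaves to inspection of Figures \ref{irred verma picture} and \ref{picture of D}.
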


\begin{remark}
When $t=0$, $\Gamma(X, \mc{I}_0)$ and $\Gamma(X, \mc{I}_\infty)$ are the Harish-Chandra modules of the limits of discrete series representations of $\SL(2,\R)$. As in Remark \ref{singular infinitesimal character}, we exclude this case from the statement of Theorem \ref{discrete series} because it is not an example of Theorem \ref{localization} since the infinitesimal character is singular.
\end{remark}

\begin{figure}
\centering
\begin{tikzpicture}
\matrix(m)[matrix of math nodes,
row sep=3em, column sep=2.5em,
text height=1.5ex, text depth=0.25ex]
{{d_0}&{d_1}&{d_2}&\cdots&{d_k}&\cdots &\\};
\path[->,font=\scriptsize]
(m-1-1) edge [bend right, red] node[below] {$1$} (m-1-2)
(m-1-2) edge [bend right, blue]  node[above] {$-t-1$} (m-1-1)
(m-1-3) edge [bend right, red] node[below] {$3$}   (m-1-4) 
edge [bend right, blue] node[above] {$-t-2$}  (m-1-2)
(m-1-2) edge [bend right, red] node[below] {$2$}   (m-1-3)
(m-1-2) edge [loop above, cyan] node[above] {$t+3$} (m-1-2)
(m-1-1) edge [loop above, cyan] node[above] {$t+1$} (m-1-1)
(m-1-4) edge [bend right, red]  node[below] {$k$}  (m-1-5)
edge [bend right, blue] node[above] {$-t-3$}  (m-1-3)
(m-1-3) edge [loop above, cyan] node[above] {$t+5$} (m-1-3)
(m-1-5) edge[bend right, red]  node[below] {$k+1$}  (m-1-6)
edge [bend right, blue]  node[above] {$-t-k$} (m-1-4)
(m-1-5) edge [loop above, cyan] node[above] { $t+1+2k$} (m-1-5)
(m-1-6) edge [bend right, blue] node[above] {\tiny $-t-1-k$} (m-1-5);
\end{tikzpicture}
\caption{Action of ${\color{red}E_\infty}$, ${\color{blue} F_\infty}$, and ${\color{cyan} H_\infty}$ on $\mc{I}_\infty(U_\infty)$}
\label{picture of D}
\end{figure}

Now we consider the open orbit $V$ and examine the structure of the standard Harish-Chandra sheaf 
\[
\mc{I}_V:=i_{V+}(\mc{O}_V). 
\]
As we did in Section \ref{Verma modules}, we will describe the local $\mf{g}$-module structure of the vector spaces $\mc{I}_V(U_0)$ and $\mc{I}_V(U_\infty)$. To begin, we will record some basic facts about $\mc{D}_t$-module functors. Let 
\begin{center}
    \begin{tikzcd}
    & U_0 \arrow[rd, hookrightarrow, "i_0"]& \\
    V \arrow[ru, hookrightarrow, "j_0"] \arrow[rr, hookrightarrow, "i_V"] \arrow[rd, hookrightarrow, "j_\infty"]&  & X \\
    & U_\infty \arrow[ru, hookrightarrow, "i_\infty"]& 
    \end{tikzcd}
\end{center}
be the natural inclusions of varieties. Because $i_0$ and $i_\infty$ are open immersions, the $\mc{D}_t$-module push-forward functors $i_{0+}$ and $i_{\infty+}$ agree with the sheaf-theoretic push-forward functors. Hence for any $\mc{D}_V$-module $\mc{F}$, 
\begin{equation}
    \label{first reduction}
    i_{V+}(\mc{F})(U_k)=j_{k+}(\mc{F})(U_k)
\end{equation}
for $k=0, \infty$. Moreover, because $j_k$, $k=0, \infty$ are affine immersions, 
\begin{equation}
    \label{second reduction}
    j_{k+}(\mc{F})(U_k)=\mc{F}(U_k)
\end{equation}
as vector spaces, with $D(U_k)$-module structure coming from the natural inclusions $D(U_k) \subset D(V)$. Applying (\ref{first reduction}) and (\ref{second reduction}) to the $\mc{D}_V$-module $\mc{O}_V$, we see that 
\begin{align}
    \label{P locally on U_0}
    \mc{I}_V(U_0)&=\C[z,z^{-1}],\\
    \label{P locally on U_infty}
    \mc{I}_V(U_\infty)&=\C[w, w^{-1}].
\end{align}
We will compute the local $\mf{g}$-module structure on these vector spaces using formulas (\ref{E_0}) - (\ref{H_infty}) and the basis 
\[
\left\{n_k:=(-1)^kw^k = (-1)^kz^{-k}\right\}_{k \in \Z}.
\]
The actions of $E_\infty, F_\infty, H_\infty$ on $\mc{I}_V(U_\infty)$ are given by 
\begin{align}
    \label{E_infty on P}
    {\color{red} E_\infty} \cdot n_k &= k n_{k-1}, \\
    \label{F_infty on P}
    {\color{blue} F_\infty} \cdot n_k & = ((t-1)-k)n_{k+1},\\ 
    \label{H_infty on P}
    {\color{cyan} H_\infty} \cdot n_k &= ((t-1)-2k) n_k.
\end{align}
The actions of $E_0, F_0, H_0$ on $\mc{I}_V(U_0)$ are given by  
\begin{align}
    \label{E_0 on P}
    {\color{red} E_0} \cdot n_k &= (t-1+k)n_{k-1}, \\
    \label{F_0 on P}
    {\color{blue} F_0} \cdot n_k & = -kn_{k+1},\\ 
    \label{H_0 on P}
    {\color{cyan} H_0} \cdot n_k &= (-(t-1)-2k) n_k.
\end{align}
Figures \ref{picture of P} and \ref{other picture of P} illustrate the resulting $\mf{g}$-modules. 

One can see from a brief inspection that the two $\mf{g}$-modules in Figures \ref{picture of P} and  \ref{other picture of P} are isomorphic\footnote{An explicit isomorphism is given by $\mc{I}_V(U_\infty)\rightarrow \mc{I}_V(U_0), n_k \mapsto n_{k-t+1}$.}. Hence the space of global sections of $\mc{I}_V$ can be identified with $\C[w,w^{-1}]$, with $\mf{g}$-module structure as in Figure \ref{picture of P}. 
 
\begin{figure}
\centering
\begin{tikzpicture}
\matrix(m)[matrix of math nodes,
row sep=3em, column sep=2.5em,
text height=1.5ex, text depth=0.25ex]
{\cdots&{n_2}&{n_1}&{n_0}&{n_{-1}}&n_{-2} &\cdots\\};
\path[->,font=\scriptsize]
(m-1-1) edge [bend right, red] node[below] {$3$} (m-1-2)
(m-1-2) edge [bend right, blue]  node[above] {$t-3$} (m-1-1)
(m-1-3) edge [bend right, red] node[below] {$1$}   (m-1-4) 
edge [bend right, blue] node[above] {$t-2$}  (m-1-2)
(m-1-7) edge [bend right, blue] node[above] {$t+2$}  (m-1-6)
(m-1-2) edge [bend right, red] node[below] {$2$}   (m-1-3)
(m-1-2) edge [loop above, cyan] node[above] {$t-5$} (m-1-2)
(m-1-3) edge [loop above, cyan] node[above] {$t-3$} (m-1-3)
(m-1-4)
edge [bend right, blue] node[above] {$t-1$}  (m-1-3)
(m-1-6) edge[bend right, red]  node[below] {$-2$}  (m-1-7)
(m-1-5) edge[bend right, red]  node[below] {$-1$}  (m-1-6)
edge [bend right, blue]  node[above] {$t$} (m-1-4)
(m-1-5) edge [loop above, cyan] node[above] {$t+1$} (m-1-5)
(m-1-4) edge [loop above, cyan] node[above] {$t-1$} (m-1-4)
(m-1-6) edge [bend right, blue] node[above] {$t+1$} (m-1-5)
(m-1-6) edge [loop above, cyan] node[above] {$t+3$} (m-1-6);
\end{tikzpicture}
\caption{Action of ${\color{red}E_\infty}$, ${\color{blue} F_\infty}$, and ${\color{cyan} H_\infty}$ on $\mc{I}_V(U_\infty)$}
\label{picture of P}
\end{figure}

\begin{figure}
\centering
\begin{tikzpicture}
\matrix(m)[matrix of math nodes,
row sep=3em, column sep=2.5em,
text height=1.5ex, text depth=0.25ex]
{\cdots&{n_2}&{n_1}&{n_0}&{n_{-1}}&n_{-2} &\cdots\\};
\path[->,font=\scriptsize]
(m-1-1) edge [bend right, red] node[below] {$t+2$} (m-1-2)
(m-1-2) edge [bend right, blue]  node[above] {$-2$} (m-1-1)
(m-1-3) edge [bend right, red] node[below] {$t$}   (m-1-4) 
edge [bend right, blue] node[above] {$-1$}  (m-1-2)
(m-1-7) edge [bend right, blue] node[above] {$3$}  (m-1-6)
(m-1-2) edge [bend right, red] node[below] {$t+1$}   (m-1-3)
(m-1-2) edge [loop above, cyan] node[above] {$-t-3$} (m-1-2)
(m-1-3) edge [loop above, cyan] node[above] {$-t-1$} (m-1-3)
(m-1-4) edge [bend right, red]  node[below] {$t-1$}  (m-1-5)
(m-1-6) edge[bend right, red]  node[below] {$t-3$}  (m-1-7)
(m-1-5) edge[bend right, red]  node[below] {$t-2$}  (m-1-6)
edge [bend right, blue]  node[above] {$1$} (m-1-4)
(m-1-5) edge [loop above, cyan] node[above] {$-t+3$} (m-1-5)
(m-1-4) edge [loop above, cyan] node[above] {$-t+1$} (m-1-4)
(m-1-6) edge [bend right, blue] node[above] {$2$} (m-1-5)
(m-1-6) edge [loop above, cyan] node[above] {$-t+5$} (m-1-6);
\end{tikzpicture}
\caption{Action of ${\color{red}E_0}$, ${\color{blue} F_0}$, and ${\color{cyan} H_0}$ on $\mc{I}_V(U_0)$}
\label{other picture of P}
\end{figure}
 
\begin{remark}
If $t \in \Z_{\geq 1}$, we can see from Figure \ref{picture of P} that $\Gamma(X, \mc{I}_V)$ has a $t$-dimensional irreducible submodule spanned by $\{n_0, \ldots n_{t-1}\}$, and the quotient of $\Gamma(X, \mc{I}_V)$ by this submodule is isomorphic to the direct sum of the discrete series representations $D_+(-t-1)$ and $D_-(t+1)$ from Theorem \ref{discrete series}.  
\end{remark} 

By construction, the $\mf{g}$-module $\Gamma(X, \mc{I}_V)$ has a compatible action of $K$ which gives it the structure of a Harish-Chandra module for the Harish-Chandra pair $(\mf{g},K)$. Explicitly, if we identify $\Gamma(X, \mc{I}_V)$ with the $\mf{g}$-module in Figure \ref{picture of P}, this action can be obtained by exponentiating the $H_\infty$-action given by formula (\ref{H_infty on P}):
\begin{equation}
    \label{K action}
   k \cdot n_k = a^{t-1-2k} n_k, \text{ where }k=\bp a & 0 \\ 0 & a^{-1} \ep \in K. 
\end{equation}
With this $K$-action, $\C[w, w^{-1}]=R(V)$ obtains the structure of an irreducible $K$-homogeneous $R(V)$-module\footnote{A $K$-homogeneous $R(V)$-module is an $R(V)$-module $M$ with an algebraic action of $K$ such that the action map $R(V) \otimes M \rightarrow M, p \otimes m \mapsto p \cdot m$ is $K$-equivariant.}.
Up to isomorphism, there is exactly one other irreducible $K$-homogeneous $R(V)$-module. 
\begin{theorem}
\label{K-homogeneous connections}
Up to isomorphism, there are exactly two irreducible $K$-homogeneous $R(V)$-modules. 
\end{theorem}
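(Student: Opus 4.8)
The plan is to classify the irreducible $K$-homogeneous $R(V)$-modules by exploiting the fact that $K$ is a torus. First I would pin down the $K$-action on $R(V)=\C[w,w^{-1}]$ that enters the equivariance condition. Identifying $K\simeq\C^\times$ via $\bp a & 0 \\ 0 & a^{-1}\ep \leftrightarrow a$, the action (\ref{G action on X}) sends $w=x_0/x_1$ to $a^2 w$, so on functions $k\cdot w^n=a^{-2n}w^n$ and $R(V)$ is graded by \emph{even} $K$-weights. (Only this geometric action, and not the twisted action (\ref{K action}), turns $R(V)$ into a $K$-algebra, since the twisted action fails to be a ring automorphism unless $t=1$; so the geometric action is the one implicit in the definition of a $K$-homogeneous module.) Because $K$ is a torus, any module $M$ with an algebraic $K$-action decomposes into integer weight spaces $M=\bigoplus_{\mu\in\Z}M_\mu$.

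Next I would extract the structure forced by the compatibility condition $k\cdot(p\cdot m)=(k\cdot p)(k\cdot m)$. Taking $p=w$ shows that multiplication by $w$ maps $M_\mu$ into $M_{\mu-2}$; since $w$ is a unit in $R(V)$, this is a bijection $M_\mu\xrightarrow{\sim}M_{\mu-2}$, with inverse multiplication by $w^{-1}$. Hence all weight spaces in a fixed residue class mod $2$ are isomorphic and are permuted by the powers of $w$. This splits $M=M^{\mathrm{ev}}\oplus M^{\mathrm{odd}}$ into its even- and odd-weight parts, each an $R(V)$-submodule (multiplication by $w^{\pm1}$ preserves parity) and $K$-stable. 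For $M$ irreducible one summand must vanish, so $M$ is concentrated in a single parity.

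Fixing, say, even parity, I would then show $M$ has rank one. The weight-zero space $M_0$ generates $M$ over $R(V)$ since $M_{-2n}=w^nM_0$; moreover $K$ acts on $M_0$ by the trivial character, so every subspace $W\subseteq M_0$ spans a $K$-stable submodule $\bigoplus_n w^nW$. Irreducibility forces $\dim M_0=1$, whence $M\simeq R(V)$ as a free rank-one module carrying an even-weight $K$-action, and any two such are isomorphic (an isomorphism is multiplication by a suitable power of $w$). The identical argument in odd parity yields a unique module up to isomorphism, realized by giving $R(V)$ the action $k\cdot w^n=a^{1-2n}w^n$, which one checks is compatible. The two modules are non-isomorphic because a $K$-equivariant isomorphism preserves weights and they carry disjoint sets of weights.

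The main obstacle is the structural reduction of the second paragraph: everything hinges on recognizing that invertibility of $w$ makes the weight spaces periodic, which simultaneously forces the parity splitting and bounds the rank; constructing the two examples is then routine, and counting them is immediate. This also matches the conceptual picture recorded earlier: $V\simeq K/\{\pm I\}$, so irreducible $K$-homogeneous $\mc{O}_V$-modules correspond to irreducible representations of the stabilizer $\stab_K(v)=\{\pm I\}\simeq\Z/2\Z$ (equivalently, to whether $-I$ acts on the fiber trivially or by $-1$), of which there are exactly two.
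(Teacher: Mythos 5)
Your proof is correct and follows essentially the same route as the paper's: decompose $M$ into $K$-weight spaces, use invertibility of $w$ (the paper uses $z$) to see that the weight spaces are permuted by powers of the coordinate, and classify irreducibles by the parity of the weights, with explicit isomorphisms given by multiplication by powers of the coordinate. The only differences are organizational --- the paper takes the $R(V)$-span of a single weight vector as its submodule rather than splitting off parity parts first --- plus your (correct, and welcome) clarification that the geometric action of $K$ on $R(V)$, not the twisted action (\ref{K action}), is the one entering the equivariance condition.
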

\begin{proof}
Let $M$ be an irreducible $K$-homogeneous $R(V)$-module. As an algebraic representation of $K$, $M$ has a decomposition
\[
M = \bigoplus_{i \in I} M_i
\]
where $I$ is an indexing set of integers and $M_i$ are $K$-subrepresentations such that for $m_i \in M_i$,
\[
k = \bp a & 0 \\ 0 & a^{-1} \ep \in K \text{ acts by } k \cdot m_i = a^i m_i.
\]
Fix some nonzero $m_i \in M_i$. The subspace 
\[
\bigoplus_{n \in \Z} \C z^n \cdot m_i \subseteq M
\]
is stable under the actions of $R(V)$ and $K$ so it forms a $K$-homogeneous $R(V)$-submodule of $M$. Since $M$ is assumed to be irreducible, this submodule must be all of $M$. 

If $i$ is even, then the assignment $m_i \mapsto z^{i/2}$ gives an isomorphism $M \simeq R(V)$ of $K$-homogeneous $R(V)$-modules. 

If $i$ is odd, then $M$ is not isomorphic to $R(V)$ because $R(V)$ contains the trivial representation of $K$ as a subrepresentation and $M$ does not. (Indeed, for any $n \in \Z$, 
\[
k \cdot (z^n \cdot m_i) = (k \cdot z^n) \cdot (k \cdot m_i) = a^{2n+i}(z^n \cdot m_i),
\]
which is not equal to $z^n \cdot m_i$ for $a \neq 1$ because $2n+i$ is odd.)

If $i \neq j$ are both odd, then the assignment $m_i \mapsto z^\frac{i-j}{2} \cdot m_j$ gives an isomorphism 
\[
\bigoplus_{n \in \Z} \C z^n \cdot m_i \xrightarrow{\sim} \bigoplus_{n \in \Z} \C z^n \cdot m_j
\]
of $K$-homogeneous $R(V)$-modules. We conclude that up to isomorphism, there are exactly two irreducible $K$-homogeneous $R(V)$-modules. 
\end{proof}
\begin{remark}
As mentioned in Section \ref{Verma modules}, the irreducible $K$-homogeneous connections on a $K$-orbit $O$ are parameterized by representations of the component group of $\stab_K{x}$ for $x \in O$. When $x \in O=V$, $\stab_{K}x=\pm 1$, so there are two irreducible $K$-homogeneous connections on $V$. Their global sections are the $K$-homogeneous $R(V)$-modules in Theorem \ref{K-homogeneous connections}.  
\end{remark}
We will refer to $R(V)$ as the {\em trivial} $K$-homogeneous $R(V)$-module and the other module, denoted by $P$, as the {\em non-trivial} $K$-homogeneous $R(V)$-module. We can use the non-trivial module $P$ to construct the fourth and final standard Harish-Chandra sheaf for the pair $(\mf{g},K)$. Let
\[
P=\bigoplus_{n \in \Z} P_{2n+1}
\]
be the decomposition of $P$ into irreducible $K$-representations. Fix $p_1 \in P_1$, and let 
\[
p_k:=z^k \cdot p_1.
\]
The set $\{p_k \}_{k \in \Z}$ forms a basis for $P$. The $R(V)$-module $P$ admits the structure of a $D(V)$-module\footnote{Loosely speaking, we can view $P$ as the ring $z^{1/2}\C[z, z^{-1}]$, which explains the $D(V)$-module structure below.} via the action 
\begin{align*}
    \partial_z \cdot p_k &=\left(k+ \frac{1}{2}\right)p_{k-1}, \\
    z \cdot p_k &= p_{k+1}. 
\end{align*}
By construction, $P$ is a $K$-homogeneous $D(V)$-module. Since $V \simeq \C^\times $ is affine, there is a corresponding $K$-homogeneous connection $\tau$. We construct a standard Harish-Chandra sheaf by pushing forward $\tau$ to $X$ using the $\mc{D}_t$-module push-forward:
\[
\mc{I}_{V, \tau}:= i_{V+}(\tau). 
\]
The local $\mf{g}$-module structure on the vector spaces 
\[
\mc{I}_{V, \tau}(U_i) \simeq P
\]
for  $i=0, \infty$ is given by the formulas 
\begin{align}
    \label{E_0 on other P} 
    {\color{red} E_0} \cdot p_k &= (-t+\frac{3}{2}+k)p_{k+1}, \\
    \label{F_0 on other P} 
    {\color{blue} F_0} \cdot p_k&= (-k-\frac{1}{2}) p_{k-1}, \\
    \label{H_0 on other P}
    {\color{cyan} H_0} \cdot p_k &= (-t+2+2k) p_k,\\
    \label{E_infty on other P}
    {\color{red} E_\infty} \cdot p_k &= (k + \frac{1}{2}) p_{k+1}, \\
    \label{F_infty on other P} 
    {\color{blue} F_\infty} \cdot p_k &= (-t-\frac{1}{2}-k)p_{k-1}, \\
    \label{H_infty on other P} 
    {\color{cyan} H_\infty} \cdot p_k &= (2k+t)p_k. 
\end{align}
These $\mf{g}$-modules are illustrated in Figures \ref{picture of other P} and \ref{other picture of other P}. We can see that they are irreducible for any value of $t \in \Z$. 

\begin{theorem}
\label{principal series}
Let $t \in \Z_{\geq 1}$ and $\mc{I}_V$, $\mc{I}_{V, \tau}$ the standard Harish-Chandra sheaves for $\mc{D}_t$ corresponding to the trivial and non-trivial $K$-homogeneous connections on $V$, respectively. Then as $\mf{g}$-modules, 
\begin{align*}
    \Gamma(X, \mc{I}_V) &\simeq P_+(t-1) \\
    \Gamma(X, \mc{I}_{V, \tau}) &\simeq P_{-}(t-1)
\end{align*}
where $P_+(t-1)$, $P_-(t-1)$ are the Harish-Chandra modules associated to the reducible and irreducible (resp.) principal series representations of $\SL(2,\R)$ corresponding to the parameter $\lambda-\rho \in \mf{h}^*$ with $\alpha^\vee(\lambda)=t$. 
\end{theorem}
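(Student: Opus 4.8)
The plan is to obtain the two $\mf{g}$-modules of global sections by gluing the chart-local descriptions already computed, and then to identify them with the Harish-Chandra modules of the principal series of $\SL(2,\R)$. For $\mc{I}_V$ the gluing is done above: the $\mf{g}$-modules of Figures \ref{picture of P} and \ref{other picture of P} are isomorphic, so a global section is determined by its restriction to the dense orbit $V$ and $\Gamma(X,\mc{I}_V)\simeq\C[w,w^{-1}]$ with the action of Figure \ref{picture of P}. For $\mc{I}_{V,\tau}$ I would argue identically: because $V\hookrightarrow X$ is an affine open immersion (its complement is the divisor $\{0,\infty\}$), the pushforward $i_{V+}$ is the sheaf pushforward, so $\Gamma(X,\mc{I}_{V,\tau})=\Gamma(V,\tau)=P$. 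The only thing to verify is that the two local $\mf{g}$-actions of Figures \ref{picture of other P} and \ref{other picture of other P} agree on $V$ under the transition isomorphism $\psi$ of (\ref{psi}) -- exactly the sanity check already performed for $E$ -- so that the global action is the one recorded in formulas (\ref{E_infty on other P})--(\ref{H_infty on other P}).

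With the two global $\mf{g}$-modules in hand, I would next pin down their invariants. Both are $K$-multiplicity free with one-dimensional weight spaces, the torus weights being $(t-1)-2k$ for $\mc{I}_V$ and $2k+t$ for $\mc{I}_{V,\tau}$; as $k$ ranges over $\Z$ these fill out all integers of parity $t-1$ and of parity $t$ respectively, and the $K$-action integrates them via (\ref{K action}) and its analogue. I would then compute the Casimir eigenvalue on each module, confirming that it is the single scalar forced by $\alpha^\vee(\lambda)=t$, so that both modules lie in $\mc{M}(\mc{U}_\theta)$ for the infinitesimal character $\theta$ of $\lambda$.

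The identification itself I would carry out by recalling the concrete induced model of the principal series of $\SL(2,\R)\cong\SU(1,1)$: normalized induction from the Borel realizes each representation on functions transforming by a character, and its space of $K$-finite vectors is spanned, with multiplicity one, by the characters of the compact torus of a single fixed parity. Writing the raising and lowering operators in this basis gives explicit coefficients, and since two multiplicity-free $(\mf{g},K)$-modules with the same $K$-spectrum and the same infinitesimal character are isomorphic as soon as their weight-by-weight $E$- and $F$-coefficients agree, the isomorphisms $\Gamma(X,\mc{I}_V)\simeq P_+(t-1)$ and $\Gamma(X,\mc{I}_{V,\tau})\simeq P_-(t-1)$ reduce to a direct comparison of scalars. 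Reducibility of $P_+(t-1)$ is then read off Figure \ref{picture of P}, where the $E$-arrow vanishes at $k=0$ and the $F$-arrow at $k=t-1$, isolating the $t$-dimensional submodule $\{n_0,\dots,n_{t-1}\}$ and the discrete-series quotient of the preceding remark; irreducibility of $P_-(t-1)$ follows because in (\ref{E_infty on other P})--(\ref{F_infty on other P}) every coefficient is a nonzero half-integer, so no arrow ever breaks.

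The main obstacle is this last matching: transporting the abstractly constructed global sections into the standard induced model and reconciling normalizations. In particular the half-integer shifts in (\ref{E_0 on other P})--(\ref{H_infty on other P}) encode the non-trivial connection $\tau$ (the heuristic ``$z^{1/2}$'' twist), and they must be tracked so that both the parity of the $K$-types and the raising/lowering coefficients line up with those of the genuine principal series; once that bookkeeping is in place, the reducible-versus-irreducible dichotomy between $P_+$ and $P_-$ is governed precisely by whether some coefficient in the $\mf{g}$-action is allowed to vanish at an integral weight.
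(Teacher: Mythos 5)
Your proposal follows essentially the same route as the paper: compute the global sections of $\mc{I}_V$ and $\mc{I}_{V,\tau}$ by gluing the chart-local descriptions (Figures \ref{picture of P}--\ref{other picture of other P}, formulas (\ref{E_infty on P})--(\ref{H_infty on other P})), observe that pushforward along the affine open immersion $i_V$ makes $\Gamma(X,\cdot)$ equal to sections over $V$, and then read off the $t$-dimensional submodule of $\Gamma(X,\mc{I}_V)$ versus the nonvanishing half-integer coefficients forcing irreducibility of $\Gamma(X,\mc{I}_{V,\tau})$. Your additional step of matching $K$-spectrum, Casimir eigenvalue, and raising/lowering coefficients against the induced model of the principal series correctly fills in the identification that the paper leaves implicit (it states Theorem \ref{principal series} as a summary of the preceding computations, treating the recognition of $P_\pm(t-1)$ as known), so this is a fleshing-out of the same argument rather than a different one.
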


\begin{remark}
Theorems \ref{discrete series} and \ref{principal series} describe two families of representations which arise in the classification of irreducible admissible representations of $\SL(2,\R)$. A complete geometric classification using standard Harish-Chandra sheaves can be obtained by removing the regularity and integrality conditions on $\lambda$. 
\end{remark}

\begin{figure}
\centering
\begin{tikzpicture}
\matrix(m)[matrix of math nodes,
row sep=3em, column sep=2.5em,
text height=1.5ex, text depth=0.25ex]
{\cdots&{p_{-2}}&{p_{-1}}&{p_0}&{p_1}&p_2 &\cdots\\};
\path[->,font=\scriptsize]
(m-1-1) edge [bend right, red] node[below] {$-5/2$} (m-1-2)
(m-1-2) edge [bend right, blue]  node[above] {$-t+3/2$} (m-1-1)
(m-1-3) edge [bend right, red] node[below] {$-1/2$}   (m-1-4) 
(m-1-4) edge [bend right, red] node[below] {$1/2$}   (m-1-5) 
(m-1-3) edge [bend right, blue] node[above] {$-t+1/2$}  (m-1-2)
(m-1-7) edge [bend right, blue] node[above] {$-t-7/2$}  (m-1-6)
(m-1-2) edge [bend right, red] node[below] {$-3/2$}   (m-1-3)
(m-1-2) edge [loop above, cyan] node[above] {$t-4$} (m-1-2)
(m-1-3) edge [loop above, cyan] node[above] {$t-2$} (m-1-3)
(m-1-4)
edge [bend right, blue] node[above] {$-t-1/2$}  (m-1-3)
(m-1-6) edge[bend right, red]  node[below] {$5/2$}  (m-1-7)
(m-1-5) edge[bend right, red]  node[below] {$3/2$}  (m-1-6)
edge [bend right, blue]  node[above] {$-t-3/2$} (m-1-4)
(m-1-5) edge [loop above, cyan] node[above] {$t+2$} (m-1-5)
(m-1-4) edge [loop above, cyan] node[above] {$t$} (m-1-4)
(m-1-6) edge [bend right, blue] node[above] {$-t-5/2$} (m-1-5)
(m-1-6) edge [loop above, cyan] node[above] {$t+4$} (m-1-6);
\end{tikzpicture}
\caption{Action of ${\color{red}E_\infty}$, ${\color{blue} F_\infty}$, and ${\color{cyan} H_\infty}$ on $\mc{I}_{V,\tau}(U_\infty)$}
\label{picture of other P}
\end{figure}

\begin{figure}
\centering
\begin{tikzpicture}
\matrix(m)[matrix of math nodes,
row sep=3em, column sep=2.5em,
text height=1.5ex, text depth=0.25ex]
{\cdots&{p_{-2}}&{p_{-1}}&{p_0}&{p_1}&p_2 &\cdots\\};
\path[->,font=\scriptsize]
(m-1-1) edge [bend right, red] node[below] {$-t-5/2$} (m-1-2)
(m-1-2) edge [bend right, blue]  node[above] {$3/2$} (m-1-1)
(m-1-3) edge [bend right, red] node[below] {$-t-1/2$}   (m-1-4) 
edge [bend right, blue] node[above] {$1/2$}  (m-1-2)
(m-1-7) edge [bend right, blue] node[above] {$-7/2$}  (m-1-6)
(m-1-2) edge [bend right, red] node[below] {$-t-3/2$}   (m-1-3)
(m-1-2) edge [loop above, cyan] node[above] {$-t-2$} (m-1-2)
(m-1-3) edge [loop above, cyan] node[above] {$-t$} (m-1-3)
(m-1-4) edge [bend right, red]  node[below] {$-t+1/2$}  (m-1-5)
edge [bend right, blue] node[above] {$-1/2$}  (m-1-3)
(m-1-6) edge[bend right, red]  node[below] {$-t+5/2$}  (m-1-7)
(m-1-5) edge[bend right, red]  node[below] {$-t+3/2$}  (m-1-6)
edge [bend right, blue]  node[above] {$-3/2$} (m-1-4)
(m-1-5) edge [loop above, cyan] node[above] {$-t+4$} (m-1-5)
(m-1-4) edge [loop above, cyan] node[above] {$-t+2$} (m-1-4)
(m-1-6) edge [bend right, blue] node[above] {$-5/2$} (m-1-5)
(m-1-6) edge [loop above, cyan] node[above] {$-t+6$} (m-1-6);
\end{tikzpicture}
\caption{Action of ${\color{red}E_0}$, ${\color{blue} F_0}$, and ${\color{cyan} H_0}$ on $\mc{I}_{V,\tau}(U_0)$}
\label{other picture of other P}
\end{figure}

\section{Whittaker modules}
\label{Whittaker modules}

In Sections \ref{Verma modules} and \ref{Admissible representations of SL(2,R)}, we gave examples of $\mc{D}_t$-modules with a Lie group action that was compatible with the $\mc{D}_t$-module structure\footnote{Compatible in the sense that the differential of the group action agrees with the Lie algebra action coming from $\mc{D}_t$.}. These were examples of Harish-Chandra sheaves. In this section, we will give another example of a $\mc{D}_t$-module with a Lie group action, but now the two actions will differ by a character of the Lie algebra. This is an example of a {\em twisted Harish-Chandra sheaf}. Twisted Harish-Chandra sheaves first arose in \cite[Appendix B]{duality} in the study of Harish-Chandra modules for semisimple Lie groups with infinite center, and were later used in \cite{TwistedSheaves} to provide a geometric description of Whittaker modules.  

Let $N$ be as in Section \ref{Verma modules} and $\mf{n}=\Lie{N}$. Fix a Lie algebra morphism $\eta:\mf{n}\rightarrow \C$. Because $\mf{n}$ is spanned by the matrix $E$, this morphism is determined by the image of $E$, which we will also refer to as $\eta$:
\[
\eta:=\eta(E) \in \C.
\]
Our starting place is the $\eta$-twisted connection 
 $\mc{O}_{U_\infty, \eta}$. As a sheaf of rings on $U_\infty$, 
\[
\mc{O}_{U_\infty, \eta}=\mc{O}_{U_\infty},
\]
but the $\mc{D}_{U_\infty}$-module structure is twisted by $\eta$. It suffices to describe this $\mc{D}_{U_\infty}$-module structure on global sections as $U_\infty$ is affine. The $D(U_\infty)$-action on the vector space $\mc{O}_{U_\infty, \eta}(U_\infty)=\C[w]$ is given by 
\begin{equation}
    \label{twisted action}
    \partial_w \cdot w^k = kw^{k-1} - \eta w^k, \hspace{2mm} w \cdot w^k = w^{k+1}.
\end{equation}
\begin{remark}
Alternatively, we could have described this $D(U_\infty)$-module in terms of exponential functions. One can see from formula (\ref{twisted action}) that the module $W=\text{span}\{w^ke^{-\eta w}\}_{k \in \Z_{\geq 0}}$ with $D(U_\infty)$-action 
\[
\partial_w \cdot (w^ke^{-\eta w}) = \frac{d}{dw}(w^k e^{-\eta w})
\]
is isomorphic to the $D(U_\infty)$-module described above.
\end{remark}

We can use the $\mc{D}_t$-module direct image functor to push the sheaf $\mc{O}_{U_\infty, \eta}$ forward to a $\mc{D}_t$-module on $X$. Let $i_\infty:U_\infty \hookrightarrow X$ be inclusion. Define 
\begin{equation}
    \label{twisted HC sheaf} 
    \mc{I}_{\infty, \eta}:= i_{\infty+}(\mc{O}_{U_\infty, \eta}).
\end{equation}
The $\mc{D}_t$-module $\mc{I}_{\infty, \eta}$ is the {\em standard $\eta$-twisted Harish-Chandra sheaf} \cite[\S3]{TwistedSheaves} associated to the Harish-Chandra pair $(\mf{g},N)$ and the open orbit $U_\infty$. 

\begin{remark} For $\eta \neq 0$, there is no standard $\eta$-twisted Harish-Chandra sheaf associated to the closed orbit $0$.
Indeed, as $\mc{O}_0$ and $\mc{D}_0$ are sheaves on a single point\footnote{Recall that $\mc{O}_0$ is the structure sheaf on the single point $N$-orbit $0 \in X$ and $\mc{D}_0$ is the sheaf of differential operators on $0$ (see set-up in \S\ref{Verma modules}).}, they are simply the data of a vector space, the vector space $\C$. A $\mc{D}_0$-module structure on $\mc{O}_0$ is an action of $\C$ on itself. As the Lie algebra element $E$ is nilpotent, it must act as multiplication by $0$ in any such $\mc{D}_0$-module structure on $\mc{O}_0$. Moreover, as $N$ fixes $0$, in the $\mf{n}$-module structure on $\mc{O}_0$ coming from the action of $N$ on $X$, the Lie algebra element $E$ also acts as multiplication by $0$. Hence for $\eta \neq 0$, it is not possible to give the sheaf $\mc{O}_0$ the structure of a $\mc{D}_0$-module in such a way the $\mf{n}$-module structure coming from the $\mc{D}_0$-action differs by $\eta$ from the $\mf{n}$-module structure coming from the $N$-action, as we have done above for the structure sheaf $\mc{O}_{U_\infty}$ on the open orbit $U_\infty$.
\end{remark}

As we did in Section \ref{Verma modules}, we will describe the $\mc{D}_t$-module structure of $\mc{I}_{\infty, \eta}$ by computing the local $\mf{g}$-module structure on the vector spaces $\mc{I}_{\infty, \eta}(U_\infty)$ and $\mc{I}_{\infty, \eta}(U_0)$ using formulas (\ref{E_0}) - (\ref{H_infty}). From these local descriptions we can identify the $\mf{g}$-action on $\Gamma(X, \mc{I}_{\infty, \eta})$.

In the chart $U_\infty$ we have 
\begin{equation}
    \label{W}
    \mc{I}_{\infty, \eta}(U_\infty)=R(U_\infty)=\C[w]
\end{equation}
as a vector space with $D(U_\infty)$-action given by (\ref{twisted action}). The action of $E_\infty$, $F_\infty$, and $H_\infty$ on the basis
\[
\left\{ n_k:=(-1)^k w^k \right\}_{k \in \Z_{\geq 0}}
\]
is given by the formulas 
\begin{align}
    \label{E_infty on W}
    {\color{red} E_\infty} \cdot n_k &= k n_{k-1} + \eta n_k, \\
    \label{F_infty on W}
    {\color{blue} F_\infty} \cdot n_k &= (t-1-k) n_{k+1} - \eta n_{k+2},\\
    \label{H_infty on W}
    {\color{cyan} H_\infty } \cdot n_k &= (t-1-2k) n_k - 2 \eta n_{k+1}. 
\end{align}
\begin{remark}
Unlike the previous examples, the differential operators $E_\infty, F_\infty$ and $H_\infty$ do not map basis vectors to scalar multiples of other basis vectors. One might think that this is the result of a poor choice of basis, but this is not the case. In fact, we can not choose a basis for this module such that each operator $E_\infty, F_\infty$ and $H_\infty$ maps basis vectors to scalar multiples of basis vectors. Instead of being disappointed by this more complicated module structure, we should use it as evidence that our previous examples were unusually well-behaved. 
\end{remark}
As we did in Sections \ref{Finite dimensional g-modules}, \ref{Verma modules}, and \ref{Admissible representations of SL(2,R)}, we can capture formulas (\ref{E_infty on W}) - (\ref{H_infty on W}) in a picture, see Figure \ref{picture of W}. In this figure, colored arrows now represent linear combinations of basis elements. For example, the two blue arrows labeled $t+2$ and $-\eta$ emanating from $n_1$ represent the relationship $F_\infty \cdot n_1 = (t+2)n_2 -\eta n_3$. 

\begin{figure}
    \centering
    \includegraphics[width=0.8\textwidth]{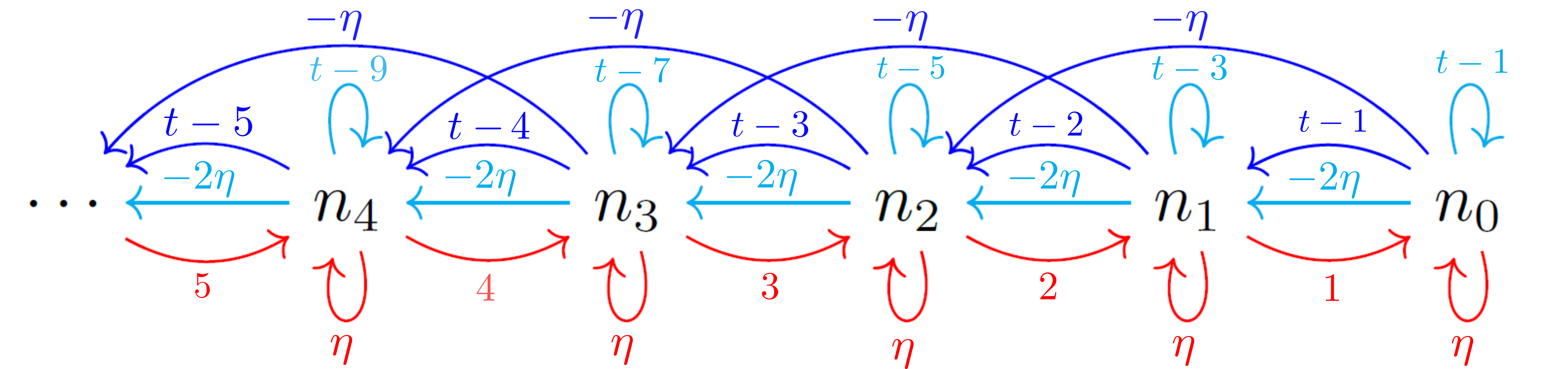}
    \caption{Action of ${\color{red}E_\infty}$, ${\color{blue} F_\infty}$, and ${\color{cyan} H_\infty}$ on $\mc{I}_{\infty, \eta}(U_\infty)$}
    \label{picture of W}
\end{figure}

Now we turn our attention to the other chart $U_0$. By the same arguments as in Section \ref{Verma modules}, as a vector space
\begin{equation}
    \label{W in other chart}
    \mc{I}_{\infty, \eta}(U_0) \simeq \C[z, z^{-1}],
\end{equation}
with $\mc{D}(U_0)$-module structure\footnote{This action is derived from the relationship (\ref{partial relationship}) between $\partial_z$ and $\partial_w$ on $V$.} given by 
\begin{equation}
    \label{twisted action in other chart}
    \partial_z \cdot z^k = kz^{k-1} + \eta z^{k-2}, \hspace{2mm} z \cdot z^k = z^{k+1}
\end{equation}
for $k \in \Z$. The actions of $E_0, F_0$ and $H_0$ on the basis 
\[
\left\{ n_k:= (-1)^k z^{-k} \right\}_{k \in \Z}
\]
are given by the formulas 
\begin{align}
    \label{E_0 on W}
    {\color{red} E_0} \cdot n_k &= (t-1+k)n_{k-1}+ \eta n_k, \\
    \label{F_0 on W}
    {\color{blue} F_0} \cdot n_k & = -kn_{k+1} - \eta n_{k+2},\\ 
    \label{H_0 on W}
    {\color{cyan} H_0} \cdot n_k &= (-(t-1)-2k) n_k - 2 \eta n_{k+1}.
\end{align}
Figure \ref{other picture of W} illustrates this action. 

\begin{figure}
\centering
    \includegraphics[width=1\textwidth]{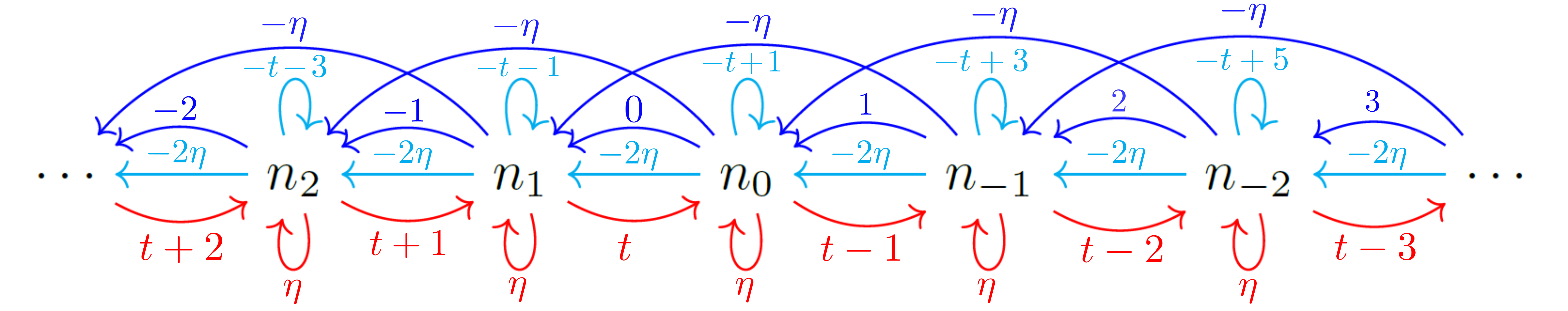}
\caption{Action of ${\color{red}E_0}$, ${\color{blue} F_0}$, and ${\color{cyan} H_0}$ on $\mc{I}_{\infty, \eta}(U_0)$}
\label{other picture of W}
\end{figure}

As we did in Section \ref{Verma modules}, we can use Figures \ref{picture of W} and \ref{other picture of W} and formulas (\ref{E_infty on W})-(\ref{H_0 on W}) to describe the $\mf{g}$-module structure on $\Gamma(X, \mc{I}_{\infty, \eta})$. A global section of $\mc{I}_{\infty, \eta}$ is a pair $(q(w), p(z))$ with $q(w) \in \C[w]$ and $p(z) \in \C[z,z^{-1}]$ such that $q(1/z)=p(z)$, so as a $\mf{g}$-module, $\Gamma(X, \mc{I}_{\infty, \eta})$ is isomorphic to $\C[w]$ with $E,F$, and $H$ actions given by formulas (\ref{E_infty on W})-(\ref{H_infty on W}). 

\begin{remark}
We draw the reader's attention to several properties of the $\mf{g}$-module $\Gamma(X, \mc{I}_{\infty, \eta})$ which can be seen from careful examination of Figures \ref{picture of W} and \ref{other picture of W}:
\begin{enumerate}
    \item The module is generated by the vector $n_0$, which has the property that $E$ acts by a scalar:
    \[
    E \cdot n_0 = \eta n_0.
    \]
    For a general semisimple Lie algebra $\mf{g}$, a vector in a $\mf{g}$-module on which $\mf{n}$ acts by a character is called a {\em Whittaker vector}. A $\mf{g}$-module which is cyclically generated by a Whittaker vector is a {\em Whittaker module}. Hence $\Gamma(X, \mc{I}_{\infty, \eta})$ is a Whittaker module.
    \item The module is irreducible. 
    \item If $\eta=0$, 
    \[
    \mc{I}_{\infty, \eta} = \mc{I}_{U_\infty}
    \]
    is the standard Harish-Chandra sheaf attached to the open orbit $U_\infty$ whose structure we described in Section \ref{Verma modules}.
    \item If $\eta \neq 0$ then the basis $\{n_k\}$ is not a basis of eigenvectors of $H$. In fact, it is impossible to choose a basis of $H$-eigenvectors for the module because it is not a weight module.
    \item The Casimir operator $\Omega = H^2 + 2EF + 2FE \in Z(\mf{g})$ acts on the module\footnote{In fact, $\Omega$ acts on the global sections of any $\mc{D}_t$-module by $(t-1)^2+2(t-1)$ by construction. This can be checked in each chart with a quick computation using (\ref{E_0}) - (\ref{H_infty}) and the relationshps $[\partial_z, z]=[\partial_w,w]=1$. The integer $(t-1)^2+2(t-1)$ is the image of $\Omega$ under the infinitesimal character $\chi:Z(\mf{g})\rightarrow \C, z\mapsto (\lambda - \rho)(p_0(z))$, where $p_0$ is the Harish-Chandra homomorphism.} by $(t-1)^2 + 2(t-1)$.
\end{enumerate}
\end{remark}
If $\eta \neq 0$, the module $\Gamma(X, \mc{I}_{\infty, \eta})$ is an irreducible nondegenerate\footnote{For a general semisimple Lie algebra $\mf{g}$, a character of $\mf{n}$ is {\em nondegenerate} if it is non-zero on all simple root subspaces. For $\mf{sl}(2,\C)$, all non-zero $\mf{n}$-characters are nondegenerate because there is only one simple root.} Whittaker module. Irreducible nondegenerate Whittaker modules were introduced and classified in \cite[\S 3]{Kostant}.

\begin{theorem}
 Let $t \in \Z_{\geq 1}$, $\eta \in \C$ and $\mc{I}_{\infty, \eta}$ the standard $\eta$-twisted Harish-Chandra sheaf for $\mc{D}_t$. Then as $\mf{g}$-modules, 
\[
\Gamma(X, \mc{I}_{\infty, \eta}) \simeq Y(\eta, t),
\]
where $Y(\eta, t)$ is the irreducible $\eta$-Whittaker module of infinitesimal character $\chi: \Omega \mapsto (t-1)^2 + 2(t-1)$. 
\end{theorem}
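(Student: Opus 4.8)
The plan is to take as input the explicit local formulas (\ref{E_infty on W})--(\ref{H_infty on W}) already computed above, and to verify the three properties that pin down $Y(\eta,t)$ through Kostant's classification \cite{Kostant}: that $\Gamma(X,\mc{I}_{\infty,\eta})$ is a Whittaker module for the character $\eta$, that it is irreducible, and that its infinitesimal character is $\chi$. First I would record that, as established in the discussion preceding the theorem, a global section is determined by its component $q(w)\in\C[w]$, so $\Gamma(X,\mc{I}_{\infty,\eta})\cong\C[w]$ as a vector space with $\mf{g}$-action given by (\ref{E_infty on W})--(\ref{H_infty on W}) in the basis $\{n_k\}$. Setting $k=0$ in (\ref{E_infty on W}) gives $E\cdot n_0=\eta n_0$, exhibiting $n_0$ as a Whittaker vector.

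The heart of the argument is irreducibility together with cyclic generation by $n_0$, and for this I would use two observations read off directly from the formulas. The operator $E-\eta\cdot 1\in U(\mf{g})$ acts by (\ref{E_infty on W}) as the pure lowering map $n_k\mapsto kn_{k-1}$, so $(E-\eta)^N n_k=\tfrac{k!}{(k-N)!}n_{k-N}$, which vanishes for $k<N$ and equals $N!\,n_0$ for $k=N$. Hence given any nonzero submodule $M'\subseteq\C[w]$ and a nonzero $v=\sum_{k=0}^N c_kn_k\in M'$ with $c_N\neq0$, applying $(E-\eta)^N$ produces $c_N N!\,n_0\in M'$, so $n_0\in M'$. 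Conversely, assuming $\eta\neq0$, formula (\ref{H_infty on W}) gives $n_{k+1}=\tfrac{-1}{2\eta}\bigl(H\cdot n_k-(t-1-2k)n_k\bigr)$, so an induction starting from $n_0$ shows that $n_0$ generates every $n_k$, hence all of $\C[w]$. Combining the two, $M'=\C[w]$, so the module is irreducible and cyclically generated by the Whittaker vector $n_0$; that is, it is an irreducible $\eta$-Whittaker module.

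Finally, for the infinitesimal character: since $\Gamma(X,\mc{I}_{\infty,\eta})$ is the space of global sections of a $\mc{D}_t$-module it is a $\mc{U}_\theta$-module, so $\mc{Z}(\mf{g})$ acts by $\chi_\theta$; in particular the Casimir $\Omega$ acts by $(t-1)^2+2(t-1)$, the value recorded above and computable directly in either chart from (\ref{E_0})--(\ref{H_infty}) using $[\partial_z,z]=[\partial_w,w]=1$. Kostant's classification \cite[\S3]{Kostant} asserts that for a nondegenerate character (and every nonzero $\eta$ is nondegenerate for $\mf{sl}(2,\C)$) there is, up to isomorphism, a unique irreducible Whittaker module with a given infinitesimal character; this unique module is by definition $Y(\eta,t)$, so the three verified properties identify $\Gamma(X,\mc{I}_{\infty,\eta})$ with it.

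The main obstacle I anticipate is the irreducibility step, and in particular its reliance on $\eta\neq0$: the generation-by-$H$ argument breaks down precisely when $\eta=0$, consistent with the fact noted earlier that $\mc{I}_{\infty,0}=\mc{I}_{U_\infty}$ has as global sections the reducible dual Verma module $I(t-1)$. Thus the irreducibility half of the conclusion should be understood to require $\eta\neq0$, with the case $\eta=0$ reducing to the earlier dual Verma computation.
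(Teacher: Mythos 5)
Your proposal is correct and takes essentially the same route as the paper: identify $\Gamma(X,\mc{I}_{\infty,\eta})$ with $\C[w]$ carrying the action (\ref{E_infty on W})--(\ref{H_infty on W}), exhibit $n_0$ as a Whittaker vector, record the Casimir eigenvalue $(t-1)^2+2(t-1)$, and invoke Kostant's classification. Where the paper only asserts irreducibility by inspection of the figures, you supply the actual argument --- $(E-\eta)$ acts as a pure lowering operator reaching $n_0$ from any nonzero element, and $H$ generates upward when $\eta\neq 0$ --- and your closing caveat that this requires $\eta\neq 0$ (the case $\eta=0$ giving the reducible dual Verma module $I(t-1)$) correctly flags an edge case that the hypothesis ``$\eta\in\C$'' in the paper's statement glosses over.
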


\bibliographystyle{alpha}
\bibliography{Dubrovnik}

\end{document}